\documentclass[11pt]{amsart}

\usepackage{amsmath,amssymb,latexsym,soul,cite,amsthm,xcolor,enumitem,graphicx,tikz, mathtools,microtype}
\usepackage{amsfonts,amsthm,amssymb,marvosym,esint,xfrac,bbm,amssymb}

\usepackage[normalem]{ulem}

\usepackage[colorlinks=true, urlcolor=blue, citecolor=red, linkcolor=blue]{hyperref}
\usepackage[english]{babel}
\usepackage[left=2.5cm,right=2.5cm,top=2.7cm,bottom=2.7cm]{geometry}
\usepackage{cancel}
\usepackage[dvipsnames]{xcolor}

\numberwithin{equation}{section}

\newtheorem{theorem}{Theorem}[section]
\theoremstyle{plain}
\newtheorem{lemma}[theorem]{Lemma}
\theoremstyle{plain}
\newtheorem{proposition}[theorem]{Proposition}
\theoremstyle{plain}

\newtheorem{definition}[theorem]{Definition}
\theoremstyle{definition}
\newtheorem{remark}[theorem]{Remark}

\def\Xint#1{\mathchoice
    {\XXint\displaystyle\textstyle{#1}}%
    {\XXint\textstyle\scriptstyle{#1}}%
    {\XXint\scriptstyle\scriptscriptstyle{#1}}%
    {\XXint\scriptscriptstyle\scriptscriptstyle{#1}}%
    \!\int}
\def\XXint#1#2#3{\setbox0=\hbox{$#1{#2#3}{\int}$}
    \vcenter{\hbox{$#2#3$}}\kern-0.5\wd0}
\def\bint{\Xint-}
\def\bint{\Xint-}
\def\dashint{\Xint{\raise4pt\hbox to7pt{\hrulefill}}}
\def\dashiint{\bint\kern-0.15cm\bint}

\title[Harnack-type inequalities for nonlocal parabolic equations...]{Harnack-type inequalities for nonlocal parabolic equations in the upper fast-diffusion range}
\author{M. Savchenko$^{1}$,  I. Skrypnik$^{1}$, Y. Yevgenieva$^{2,1}$}
\thanks{$^{1}$Institute of Applied Mathematics and Mechanics, National Academy of Sciences of Ukraine, 84116 Sloviansk, Ukraine}
\thanks{$^{2}$Max Planck Institute for Dynamics of Complex Technical Systems, 39106 Magdeburg, Germany}
\thanks{{\tt\small\ shan{\_}maria@ukr.net, ihor.skrypnik@gmail.com, yevgenieva@mpi-magdeburg.mpg.de}}

\begin{document}
\begin{abstract} 
We study non-negative weak solutions to a class of doubly nonlinear nonlocal parabolic equations of the form
$$
(u^q)_t+\mathcal Lu=0,
$$
where $\mathcal L$ is a nonlocal $p$-Laplace-type operator with a symmetric measurable kernel. We consider the fast-diffusion range $1<p<q+1$ and establish Harnack-type estimates in the upper fast-diffusion regime $\widetilde p_c<p<q+1$, where $\widetilde p_c$ is a critical exponent determined by the interplay between the temporal nonlinearity and the nonlocal diffusion.  In particular, the estimates apply to the nonlocal parabolic $p$-Laplace  equation and  the nonlocal porous medium equation.

\vskip0.2cm 
\noindent
{{\bf MSC 2020:} 35R11, 35B65, 35K55} .

\vskip0.1cm 
\noindent
{\bf{Key Words}}: {nonlocal parabolic equations, doubly nonlinear equations, Harnack inequality}

\end{abstract}

\maketitle

	\begin{center}
		\begin{minipage}{9cm}
			\small
			\tableofcontents
		\end{minipage}
	\end{center}

\addtocontents{toc}{\protect\setcounter{tocdepth}{1}}

\section{Introduction and Main Results}\label{Introduction}

In this paper, we consider a doubly nonlinear nonlocal parabolic equation of the form
\begin{equation}\label{eq1.1}
(|u|^{q-1} u)_t+\mathcal{L}u= 0,\quad (x, t)\in \Omega_T,\quad 1< p < q+1,
\end{equation}
where $\Omega_T:=\Omega \times(0, T)$, $\Omega$ is a bounded open domain in $\mathbb{R}^N$, $u\geqslant 0$ in $\Omega_T$, and the nonlocal operator $\mathcal{L}$ is defined by
\begin{equation}\label{eq1.2}
\mathcal{L} u= \operatorname{P.V.} \int\limits_{\mathbb{R}^N} K(x, y, t) |u(x, t)- u(y, t)|^{p-2}(u(x, t)- u(y, t))\,dy,
\end{equation}
for some $p>1$. Here $\operatorname{P.V.}$ denotes the principal value of the integral, whereas the kernel
$K : \mathbb{R}^N \times \mathbb{R}^N \times (0, T) \rightarrow \mathbb{R}_+$ is a measurable function satisfying the following condition
uniformly in $t$:
\begin{equation}\label{eq1.3}
\frac{K_1}{|x-y|^{N+sp}} \leqslant K(x, y, t)=K(y, x, t)\leqslant \frac{K_2}{|x-y|^{N+sp}},\quad \text{a. e.}\quad x, y \in \mathbb{R}^N, 
\end{equation}
with some $0< K_1 < K_2$ and $s\in (0, 1)$, $sp<N$.  The numbers $K_1$, $K_2$, $p$, $q$, $s$, $N$ are further referred to as the
data. We use $\gamma$ to denote a generic positive constant depending only on the data, whose value may change from line to line.

Before stating the main results, we briefly review the history of the problem. The local counterpart of \eqref{eq1.1} is the following doubly nonlinear parabolic equation:
\begin{equation}\label{eqLocal}
(|u|^{q-1}u)_t-\operatorname{div}(|\nabla u|^{p-2}\nabla u)=0,\qquad p>1,\ q>0.
\end{equation}

The regularity theory for \eqref{eqLocal} is now well developed. In the Trudinger case $q=p-1$, the study of Harnack inequalities dates back to \cite{Tru1968} and was further developed in \cite{GiaVes2006,KinKuu2007}. H\"older continuity of solutions to Trudinger's equation was established in \cite{KuuSilUrb2012,KuuLalSilUrb2012}, and these results were subsequently extended to signed solutions in \cite{BogDuzLia2021}.

For a comprehensive overview of the regularity theory for general doubly nonlinear equations with $p>1$ and $q>0$, including Harnack inequalities and H\"older continuity results in the doubly degenerate, doubly singular, and mixed regimes, we refer the reader to \cite{DiB1993, DiBGiaVes2012, Iva1991, PorVes1993, VesVes2022, Hen2023, LiaSch2022, BogHerSchSin2021, BogDuzGiaLiaSch2026, BogDuzLiaSch2023, CiaHenSavSkrYev2026arxiv, Skr2026arxiv}.

As for the nonlocal equation \eqref{eq1.1}-\eqref{eq1.3}, the Trudinger case $q=p-1$ appears to be the most extensively studied.
These results include local boundedness of solutions, H\"older continuity, Harnack inequalities, and weak Harnack inequalities. For a detailed account of these results, see \cite{BanGarKin2022, BanGarKin2023, CiaNak2026arxiv, Pra2024, ShaZha2026}.   Local boundedness and H\"older continuity of weak solutions have been investigated for both nonlocal parabolic $p$-Laplace equations with $p\neq 2$ and nonlocal porous medium equations; see, for instance, \cite{AdiPraTew2025, BraLinStr2021, ByuKim2024, DeF2026arxiv, DinZhaZho2021, KimLeePra2026, Lia2024a}.   In the singular range $1<p<2$ with $q=1$, integral Harnack-type estimates were obtained in \cite{CasCiaIan2026}.
In the fast-diffusion setting $p=2$ and $q>1$, an integral Harnack inequality was proved for $1<q<\frac{N}{N-2s}$, $N\geq2$, together with an expansion of positivity for $q>1$ under suitable tail assumptions; see \cite{CasDeFSta2026arxiv}. The corresponding integral estimate can be viewed as a step towards a pointwise Harnack theory.

For doubly nonlinear nonlocal parabolic equations with $p>1$ and $q>0$, the local regularity theory is still far from complete.  H\"older continuity of locally bounded weak solutions has been established in the doubly degenerate range $p>2$ and $0<q<p-1$ \cite{Li2026}, as well as in certain mixed regimes \cite{AdiMod2025arxiv}.
An expansion of positivity result under suitable tail conditions was established in \cite{MisYam2025}, following the approach of DiBenedetto, Gianazza and Vespri \cite{DiBGiaVes2012}.  

The literature on Harnack inequalities for these equations appears to be rather limited. A Harnack inequality was established for $q=1$ and $p=2$ in \cite{KasWei2024, LiaWei2025, Str2019} (see also the references therein for an overview of related results).  When $q=1$ and $p>2$, Prasad~\cite{Pra2026arxiv} established an elliptic-type Harnack inequality for the fractional parabolic $p$-Laplacian using a comparison principle. In the same parameter ranges, Liao~\cite{Lia2026arxiv} obtained intrinsic Harnack estimates and established an expansion of positivity result. For $0<q<p-1$, a Harnack inequality was established in \cite{SavSkrYev2026arxiv} based on the ideas developed in \cite{Lia2026arxiv}.  Whether a Harnack inequality holds in the fast diffusion regime remains an open problem. The present paper addresses this issue.

Postponing the formal definitions of weak solution, we will proceed to present the main
results. To this end, we define the following number
\begin{equation}\label{eqnumber}
s\,\widetilde p_c (\widetilde p_c-1)+N(\tilde{p}_c-q-1)=0,\quad \widetilde{p}_c\in \big(p_c, q+1\big),\quad p_c:=\frac{(q+1) N}{N+s q}.
\end{equation}

\begin{theorem}\label{th1.1}
Let $u$ be a continuous, local weak solution to \eqref{eq1.1}-\eqref{eq1.3} in $\Omega_T$, $u\geqslant 0$ in $\Omega_T$ and $\widetilde p_c <p <q+1$. Fix $(x_0,  t_0) \in \Omega_T$ such that $u(x_0, t_0) >0$. There exist constants $\delta \in (0, 1)$, $C_1$, $C_2$, $C_3 >1$ depending only on the data, such that if
$$Q_{16 \rho, 4\theta}(x_0, t_0)\subset \Omega_T,\quad \theta:=[u(x_0, t_0)]^{q-p+1}\rho^{sp},$$
and
\begin{equation}\label{eqthTail}
\bigg(\int\limits_{t_0-\theta}^{t_0+\theta}\,\,\,\int\limits_{\mathbb{R}^N\setminus B_\rho(x_0)}\frac{[u_-(x, \tau)]^{p-1}}{|x-x_0|^{N+sp}}dx d\tau\bigg)^{\frac1q}\leqslant \frac{1}{C_1} u(x_0, t_0),
\end{equation}
then 
\begin{equation}\label{eqHarnack1}
\frac{1}{C_2}\sup\limits_{B_\rho(x_0)\times (t_0-\delta \theta, t_0-\frac{1}{2}\delta \theta)}\,u\leqslant u(x_0, t_0)\leqslant C_2\,\inf\limits_{B_\rho(x_0)\times (t_0+\frac{1}{2}\delta \theta, t_0+\delta\theta)}\, u,
\end{equation}
and 
\begin{equation}\label{eqHarnack2}
\begin{aligned}
\frac{1}{C_3}\bigg(\int\limits^{t_0-\frac{1}{2}\delta \theta}_{t_0-\delta \theta}\,\,\int\limits_{\mathbb{R}^N}\frac{[u_+(x, \tau)]^{p-1}}{(\rho+|x-x_0|)^{N+sp}} dxd\tau\bigg)^{\frac1q}&\leqslant
u(x_0, t_0)\\&\leqslant C_3 \bigg(\,\int\limits^{t_0}_{t_0-\delta \theta}\,\,\,\int\limits_{\mathbb{R}^N}\frac{[u_+(x, \tau)]^{p-1}}{(\rho+|x-x_0|)^{N+sp}} dxd\tau\bigg)^{\frac1q}. 
\end{aligned}
\end{equation}
\end{theorem}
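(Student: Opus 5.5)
We plan to follow the scheme of DiBenedetto--Gianazza--Vespri for the fast-diffusion range, adapted to the nonlocal setting as in Liao's work and \cite{SavSkrYev2026arxiv}. The natural scaling is $\theta=[u(x_0,t_0)]^{q-p+1}\rho^{sp}$, so after the change of variables $v(y,s)=u(x_0+\rho y,t_0+\theta s)/u(x_0,t_0)$ we obtain a solution of an equation of the same type, with $v(0,0)=1$, in $Q_{16,4}$, and with tail \eqref{eqthTail} bounded by $1/C_1$. We therefore prove everything for $\rho=1$ and $u(x_0,t_0)=1$. The proof has four ingredients, which we carry out in the order below.

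\textbf{Step 1: integral (``$L^1$--$L^1$'' type) Harnack estimate.} From the energy (Caccioppoli) estimates for $(u^q)_t+\mathcal Lu=0$, tested with $u\zeta^p$ and with powers $u^{\epsilon}\zeta^p$ for small $\epsilon$, we derive the analogue of the DiBenedetto--Herrero estimate:
\[
\sup_{t_1<t<t_2}\int_{B_\rho}u^q(x,t)\,dx\le\gamma\inf_{t_1<t<t_2}\int_{B_{2\rho}}u^q\,dx+\gamma\Big(\tfrac{t_2-t_1}{\rho^{\lambda}}\Big)^{\frac{q}{q+1-p}}+\gamma\,(\text{tail}).
\]
Here $\lambda=N(p-q-1)+spq$ is positive exactly in the supercritical range $p>p_c$. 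The nonlocal term produces an extra tail contribution. We control it by $\int\int u_+^{p-1}(\rho+|x|)^{-N-sp}$, which is precisely the quantity appearing in \eqref{eqHarnack2}.

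\textbf{Step 2: $L^r$--$L^\infty$ estimate.} We use De Giorgi iteration on shrinking cylinders with levels $k_j$, combined with the fractional Sobolev inequality and the tail term. This gives
\[
\sup_{Q_{\rho/2}}u\le\gamma\Big(\tfrac{\rho^{sp}}{\theta}\Big)^{\frac{N}{\lambda_r}}\Big(\sup_t\fint_{B_\rho}u^r\Big)^{\frac{sp}{\lambda_r}}+\gamma\Big(\tfrac{\theta}{\rho^{sp}}\Big)^{\frac1{q+1-p}}+\text{Tail},
\qquad \lambda_r=N(p-q-1)+spr .
\]
With $r=q$ we need $\lambda_q>0$. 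The nonlocal tail, however, enters the iteration with weights $u^{p-1}$. Closing the iteration at the level $r=q$ together with the tail forces the stricter restriction $s\,p(p-1)+N(p-q-1)>0$, i.e.\ $p>\widetilde p_c$. We take this to be the role of \eqref{eqnumber}. Combining Steps 1 and 2, with the tail bounded via the $u_+$ integral, yields the right inequality in \eqref{eqHarnack2} and the upper bound $\sup u\le\gamma$ on the backward cylinder. The latter is the left inequality of \eqref{eqHarnack1}, after choosing $\delta$ small so that the backward cylinder lies where the estimates apply.

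\textbf{Step 3: expansion of positivity.} Continuity and $u(0,0)=1$, together with the $L^\infty$ bound on a slightly earlier cylinder and the tail smallness \eqref{eqthTail} for $u_-$ (with $C_1$ large), give a measure-theoretic positivity statement. Namely, $u\ge\eta$ on a fixed fraction of $B_1$ at some time in $(-\delta,0)$; this comes from Step 1 applied backward, since $\int u^q$ cannot drop too fast. We then invoke the nonlocal fast-diffusion expansion of positivity (De Giorgi lemmas plus the time-change $\tau=\ln$ trick in the singular range) to propagate $u\ge\eta'$ to $B_1\times(\tfrac12\delta,\delta)$. This gives the right inequality of \eqref{eqHarnack1}. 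The left inequality of \eqref{eqHarnack2} follows by estimating $\int u^q(\cdot,0)$ from below by the weighted $u^{p-1}$ integral over the backward interval. For this we use Step 1 run with the global weighted test function, which gives $\int u^{p-1}$ in terms of $\int u^q$ and $\theta$.

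\textbf{Main obstacle.} We expect the hardest part to be handling the tail in the $L^q$--$L^\infty$ step and in the $L^1$ Harnack. The tail is of order $p-1<q$ while the natural integrability is $q$, and we must avoid a circular dependence on $\sup u$ outside $B_\rho$. To do this we would split the tail into $B_{16\rho}\setminus B_\rho$, controlled by the local sup via an interpolation/covering argument of Krylov--Safonov type, and its complement, controlled by the integral in \eqref{eqHarnack2}. We then absorb the resulting terms using Young's inequality, and the exponents close precisely when $p>\widetilde p_c$.
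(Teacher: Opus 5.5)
There is a genuine gap. Steps 1--2 do not give the left inequality of \eqref{eqHarnack1}, and the rest of your chain depends on it.

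An $L^r$--$L^\infty$ estimate bounds $\sup u$ on a backward cylinder by integral norms of $u$ (local mass and the $u_+$-tail) plus the intrinsic term $(\theta/\rho^{sp})^{1/(q+1-p)}$. With $u(0,0)=1$, nothing in Steps 1--2 bounds those integral norms \emph{from above} by $u(x_0,t_0)$. The $L^1$-type Harnack of Step 1 only compares masses at different times with each other, and shrinking $\delta$ does not change this. Applied at $(x_0,t_0)$, Step 2 yields only a \emph{lower} bound on the integrals, which is the right side of \eqref{eqHarnack2}. An upper bound of the integrals by $\gamma u_0^q$ is essentially the left side of \eqref{eqHarnack2}, and the paper obtains that \emph{after}, and \emph{from}, the backward sup bound.

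The same circularity reaches the rest of your argument. Step 3 needs ``the $L^\infty$ bound on a slightly earlier cylinder'' to turn a lower bound on $\int u^q$ into measure information. Your route to the left of \eqref{eqHarnack2}, as written, needs $\int_{B_\rho}u^q(\cdot,t_0)\lesssim u_0^q\rho^N$. That is a sup bound at time $t_0$ itself, an elliptic-type statement the paper leaves open.

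Step 3 has a second problem. Only the $u_-$-tail is assumed small, so the lower bound produced by Step 2 may be carried entirely by the far-field $u_+$-tail. In that case there is no local mass for your $L^1$-Harnack/expansion argument to start from.

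The missing idea is the nonlocal measure-shrinking Lemma \ref{lemShrinking1}. In the energy estimate for $(u-\epsilon k)_-$, the nonlocal good term is bounded below by
\[
\epsilon k\,\inf_t\big|B_{\frac34 r}(x_0)\cap\{u(\cdot,t)\le\tfrac12\epsilon k\}\big|\Big(\gamma^{-1}\int\!\!\int_{\mathbb{R}^N}\frac{u_+^{p-1}}{(r+|y-x_0|)^{N+sp}}\,dy\,d\tau-\gamma(\epsilon k)^q\Big).
\]
Hence a lower bound on the weighted \emph{global} integral directly yields a time with a small sublevel set. This needs no upper bound on $u$ and works whether the mass sits near $x_0$ or far away.

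The paper's order is as follows.
\begin{enumerate}
\item Proposition \ref{pr1.2}, with $\theta=(u_0/4\bar C_3)^{q-p+1}\rho^{sp}$, gives the right side of \eqref{eqHarnack2}. This is an $L^{p-1}$ space-time--$L^\infty$ estimate, and $\widetilde\varkappa>0$ enters only in its interpolation step.
\item With $\mathcal I=u_0/(4\bar C_3)$, Lemmas \ref{lemShrinking1} and \ref{lemPropagation} together with Proposition \ref{pr.Expansion} give the right side of \eqref{eqHarnack1}; this is Proposition \ref{pr1.1}.
\item The backward sup bound then comes from the forward bound plus continuity, via the DiBenedetto--Gianazza--Vespri argument (Chapter~6, \S9).
\item Only then does the left side of \eqref{eqHarnack2} follow, from \eqref{eqShr8} with $\frac12\epsilon k=C_2u_0$ on $B_\rho(x_0)\times(t_0-\delta\eta,t_0-\frac12\delta\eta)$, where $\{u\le C_2u_0\}$ has full measure.
\end{enumerate}
No $L^1$-Harnack is needed anywhere.
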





Theorem \ref{th1.1} follows from two propositions, the first of which provides an analogue of a weak Harnack inequality.

\begin{proposition}\label{pr1.1}
Let $u$ be a  local weak super-solution of \eqref{eq1.1}--\eqref{eq1.3} in $\Omega_T$, $u\geqslant 0$ in $\Omega_T$, let $\widetilde p_c < p< q+1$, and fix $\mathcal{I}>0$. Construct the cylinder
$$
Q_{\rho, \theta}(x_0, t_0):=B_{\rho}(x_0)\times (t_0- \theta, t_0+\theta)
\subset Q_{16 \rho, 4\theta}(x_0,  t_0) \subset\Omega_T, \qquad \theta:=\mathcal{I}^{q-p+1}\rho^{sp},
$$
There exist constants  $\delta \in (0, 1)$, $\bar{C}_1$, $\bar{C}_2>1$, depending only on the data, such that if
\begin{equation}\label{eqWeakProp1}
\mathcal{I}\leqslant \frac{1}{\delta^{\frac{N+sp}{spq}}}\left( \,\,\int\limits_{t_0-\delta \theta}^{t_0} \,\,\int\limits_{\mathbb{R}^N}\frac{[u_+(x,t)]^{p-1}}
{(\rho+|x- x_0|)^{N+sp}}dx dt\right)^{\frac1q},
\end{equation}
and
\begin{equation}\label{eqWeakProp2}
\left(\,\int\limits_{t_0-\theta}^{t_0+\theta} \,\,\,\int\limits_{\mathbb{R}^N\setminus B_\rho(x_0)}
\frac{[u_-(x,t)]^{p-1}}{|x-x_0|^{N+sp}}dx dt\right)^{\frac1q}\leqslant \frac{\mathcal{I}}{\bar{C}_1},
\end{equation}
then
\begin{equation}\label{eqWeakProp3}
\mathcal{I}\leqslant \bar{C}_2\inf\limits_{B\rho(x_0)} u(\cdot, t),
\end{equation}
for all $t$ in the range
$$t_0+\frac{1}{2}\delta \theta \leqslant t \leqslant t_0+\delta \theta.$$
\end{proposition}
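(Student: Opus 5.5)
Our strategy is to show that the integral assumption \eqref{eqWeakProp1} forces $u$ to be large on a set of positive measure at some time slice in $(t_0-\delta\theta,t_0)$. We then propagate this positivity forward in time and across $B_\rho(x_0)$ by an expansion of positivity argument adapted to the fast-diffusion range.

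\textbf{Step 1: an $L^1$-type (Harnack-type) estimate for super-solutions.} Test the equation with $\zeta^{p}(x)$, a cutoff supported in $B_{2\rho}(x_0)$, where $\zeta\equiv1$ on $B_\rho(x_0)$. For nonnegative super-solutions this yields a lower bound for the growth in time of $\int_{B_{2\rho}} u^q\zeta^p\,dx$ in terms of the nonlocal interaction with the far field. Since $u\ge0$ in $\Omega_T$ and $u_-$ outside $B_\rho$ is controlled by \eqref{eqWeakProp2}, the term $\int\int_{\mathbb R^N\setminus B_{2\rho}} K\,u^{p-1}(y)\,dy$ enters with a favourable sign. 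Combining this with the local part, estimated through $|u(x)-u(y)|^{p-1}\le \gamma\,(u(x)^{p-1}+u(y)^{p-1})$ and Young's inequality, we expect an inequality of the form
$$
\int\limits_{t_0-\delta\theta}^{t_0}\int\limits_{\mathbb R^N}\frac{u_+^{p-1}}{(\rho+|x-x_0|)^{N+sp}}\,dx\,dt\le \gamma\rho^{-N}\sup_{t\in(t_0-\delta\theta,t_0+\delta\theta)}\int\limits_{B_{2\rho}}u^q(x,t)\,dx+\gamma\,\delta\theta\,\rho^{-sp}\,\mathcal I^{p-1}+\text{tail}.
$$
The $\mathcal I^{p-1}$ term comes from bounding the local $u^{p-1}$ contribution via the $L^q$ mass and Hölder's inequality, which is possible because $p-1<q$. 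With $\theta=\mathcal I^{q-p+1}\rho^{sp}$, \eqref{eqWeakProp1} then makes the left side at least $\delta^{(N+sp)/sp}\mathcal I^q$. Choosing $\delta$ small absorbs the remaining terms and gives a time $\bar t\in(t_0-\delta\theta,t_0+\delta\theta)$ at which $\rho^{-N}\int_{B_{2\rho}}u^q(\cdot,\bar t)\gtrsim \delta^{\beta}\mathcal I^q$.

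\textbf{Step 2: from integral mass to a measure estimate.} Step 1 only gives mass, and in the fast-diffusion range the mass could concentrate. We therefore use the $L^1$–$L^\infty$ estimate for sub-solutions, here replaced by a De Giorgi/local-boundedness argument applied to the truncations $(u-k)_-$, to convert the mass bound into a measure bound
$$
|\{u(\cdot,\bar t)>\epsilon\mathcal I\}\cap B_{2\rho}|\ge \alpha|B_{2\rho}|,
$$
with $\alpha$ and $\epsilon$ depending on the data and $\delta$. This step is where the restriction $p>\widetilde p_c$ enters: the $L^q$–$L^\infty$ (or $L^r$) smoothing for fast diffusion holds only above the critical exponent defined in \eqref{eqnumber}. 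That exponent relation is exactly what makes the scaling of $\sup u$ in terms of the $L^q$ mass closable against the intrinsic time $\theta$.

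\textbf{Step 3: expansion of positivity.} Starting from the measure information at $\bar t$, apply the nonlocal De Giorgi lemmas to $(u-k)_-$, as in \cite{MisYam2025,Lia2026arxiv}. The tail term is controlled by \eqref{eqWeakProp2} with $\bar C_1$ large. First we spread positivity over a short forward time interval with a smaller level. We then use the fast-diffusion change of time variable, which gives the waiting time $(\epsilon\mathcal I)^{q-p+1}\rho^{sp}$, comparable to $\delta\theta$ after adjusting $\epsilon$. This yields $u\ge \eta\mathcal I$ on $B_\rho(x_0)\times(t_0+\frac12\delta\theta,t_0+\delta\theta)$, i.e. \eqref{eqWeakProp3} with $\bar C_2=\eta^{-1}$.

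\textbf{Main obstacle.} We expect the main obstacle to be Step 2 together with the bookkeeping that the times line up. In the fast-diffusion range positivity can extinguish, so the reduction requires the sharp smoothing exponent $\widetilde p_c$. We would first try a Moser/De Giorgi iteration on $u$ from the $L^q$ mass with the nonlocal Sobolev embedding, tracking the exponent $s p(p-1)+N(p-q-1)>0$ precisely.
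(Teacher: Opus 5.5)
The decisive gap is Step 2. Proposition \ref{pr1.1} is a statement about super-solutions, and super-solutions need not be locally bounded. So no $L^q$--$L^\infty$ smoothing is available. De Giorgi arguments on $(u-k)_-$ only turn measure information into pointwise lower bounds. Nothing in your plan prevents the $L^q$ mass found in Step 1 from concentrating on a set of tiny measure, so the passage to $|\{u(\cdot,\bar t)>\epsilon\mathcal I\}\cap B_{2\rho}|\ge\alpha|B_{2\rho}|$ is unsupported.

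Step 1 also fails to close as written. Since $\theta=\mathcal I^{q-p+1}\rho^{sp}$, the term $\gamma\delta\theta\rho^{-sp}\mathcal I^{p-1}$ equals $\gamma\delta\mathcal I^q$. By contrast, \eqref{eqWeakProp1} only bounds the left side below by $\delta^{1+N/sp}\mathcal I^q$. The ratio $\gamma\delta^{-N/sp}$ blows up as $\delta\to0$, so shrinking $\delta$ makes the absorption worse, not better. Two smaller issues: the near-diagonal part of the nonlocal term tested against $\zeta^p$ needs a Gagliardo-seminorm bound when $sp\ge1$, and your $\bar t$ may fall after $t_0+\frac12\delta\theta$, which spoils the forward propagation.

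The missing idea is that in the nonlocal setting you never need the mass. The energy estimate for $(u-\epsilon\mathcal I)_-$, in the form used in Lemma \ref{lemShrinking1}, carries on its left side the nonnegative term $\int_{t_0-\delta\theta}^{t_0}\int_{B_{3\rho/4}}u^-_{\epsilon\mathcal I}(x,\tau)\int_{\mathbb R^N}[u^+_{\epsilon\mathcal I}(y,\tau)]^{p-1}|x-y|^{-N-sp}\,dy\,dx\,d\tau$.

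Because $|x-y|\le\rho+|y-x_0|$ for $x\in B_\rho(x_0)$, this term dominates, up to a lower-order correction, the quantity $\epsilon\mathcal I\,\inf_t|B_{3\rho/4}\cap\{u(\cdot,t)\le\frac12\epsilon\mathcal I\}|$ times exactly the integral in \eqref{eqWeakProp1}. This holds no matter how $u_+$ is distributed. Once the tail of $u_-$ is controlled by \eqref{eqWeakProp2}, the right side is at most $\gamma(\epsilon^{q+1}+\epsilon^p\delta)\mathcal I^{q+1}|B_\rho|$. Using \eqref{eqWeakProp1} with $\epsilon\approx\delta^{(N+sp)/(spq)}$, this gives a time $\tilde t\in(t_0-\delta\theta,t_0)$ at which $|B_{3\rho/4}\cap\{u(\cdot,\tilde t)\le\frac12\epsilon\mathcal I\}|\le\frac14|B_{3\rho/4}|$. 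Lemma \ref{lemPropagation} and Proposition \ref{pr.Expansion} then finish.

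This is also where $\widetilde p_c$ genuinely enters in this statement; the smoothing mechanism you invoke is its role in Proposition \ref{pr1.2}, not here. The propagation time $\tilde\delta(\tilde\epsilon\epsilon\mathcal I)^{q-p+1}\rho^{sp}\approx\delta^{(N+sp)(q-p+1)/(spq)}\theta$ must exceed $2\delta\theta$. That is, $\delta^{\widetilde\varkappa/(spq)}\le c$ with $\widetilde\varkappa=sp(p-1)+N(p-1-q)$, and small $\delta$ achieves this precisely when $p>\widetilde p_c$.
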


\begin{remark}\label{remWeak}
Note that since 
$$\int\limits^{t_0}_{t_0-\delta\theta}\,\,\,\int\limits_{B_\rho(x_0)}\frac{\big[u(x, t)\big]^{p-1}}{\big(\rho+|x-x_0|\big)^{N+sp}}\,dx dt\asymp \frac{1}{\rho^{N+sp}} \int\limits^{t_0}_{t_0-\delta \theta}\,\,\,\int\limits_{B_\rho(x_0)}\big[u(x, t)\big]^{p-1}\,dx dt,$$
and
$$\int\limits^{\bar{t}}_{t_0-\delta \theta}\,\,\,\int\limits_{\mathbb{R}^N \setminus B_\rho(x_0)}\frac{\big[u_+(x, t)\big]^{p-1}}{\big(\rho+|x-x_0|\big)^{N+sp}}\,dx dt\asymp  \int\limits^{t_0}_{t_0-\delta \theta}\,\,\,\int\limits_{\mathbb{R}^N\setminus B_\rho(x_0)}\frac{\big[u_+(x, t)\big]^{p-1}}{|x-x_0|^{N+sp}}\,dx dt,$$
condition \eqref{eqWeakProp1} is equivalent (up to the constant) to
\begin{equation}\label{eqWeakProp4}
\mathcal{I}\leqslant \Big(\fint\limits^{t_0}_{t_0-\delta \theta}\,\,\fint\limits_{B_\rho(x_0)}\big[u(x, t)\big]^{p-1}\,dx dt\Big)^{\frac{1}{p-1}}+ \operatorname{Tail}(u_+, Q^-_{\rho, \delta \theta}(x_0, t_0)),\quad \theta=\mathcal{I}^{q-p+1}\rho^{sp}.
\end{equation}
\end{remark}

The second ingredient is a local upper estimate for sub-solutions.

\begin{proposition}\label{pr1.2}
Let $u$ be a  local, weak sub-solution to \eqref{eq1.1}-\eqref{eq1.3} in $\Omega_T$, $u\geqslant 0$ in $\Omega_T$ and  $\widetilde{p}_c < p < q+1$. In the case $0< q< 1$ assume additionally that $u$ is locally bounded in $\Omega_T$. There exists number $\bar{C}_3 > 0$ depending only on the data, such that for every cylinder
$Q^-_{\rho, \theta}(x_0, t_0)\subset \Omega_T$ there holds
\begin{equation}\label{eqUp1}
\begin{aligned}
\sup\limits_{Q^-_{\frac{1}{2}\rho, \frac{1}{2}\theta}(x_0, t_0)} u&\leqslant  \bar{C}_3 \bigg[\Big(\frac{\rho^{sp}}{\theta}\Big)^{\frac{N+sp}{\widetilde \varkappa}}\bigg(\int\limits^{t_0}_{t_0-\theta}\,\,\int\limits_{\mathbb{R}^N} \frac{[u_+(x, \tau)]^{p-1}}{(\rho+|x-x_0|)^{N+sp}}\,dx\,d\tau\bigg)^{\frac{sp}{\widetilde \varkappa}}+
 \Big(\frac{\theta}{\rho^{sp}}\Big)^{\frac{1}{q+1-p}}\\&+ \bigg( \int\limits^{t_0}_{t_0-\theta}\,\,\int\limits_{\mathbb{R}^N}\frac{[u_+(x, \tau)]^{p-1}}{(\rho+|x-x_0|)^{N+sp}}dx d\tau\bigg)^{\frac{1}{q}}\bigg],
 \end{aligned}
\end{equation}
here
$$\widetilde \varkappa:=sp(p-1)+N(p-1-q) > 0.$$
\end{proposition}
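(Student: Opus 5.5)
We prove Proposition \ref{pr1.2} by a De Giorgi iteration for sub-solutions, run in an $L^{q+1}$-type (or $L^{p-1}$-type) energy framework, followed by an interpolation that lowers the integrability to the weighted $L^{p-1}$ quantity
\[
\mathcal T:=\int_{t_0-\theta}^{t_0}\int_{\mathbb R^N}\frac{[u_+]^{p-1}}{(\rho+|x-x_0|)^{N+sp}}\,dx\,d\tau .
\]
The condition $\widetilde\varkappa>0$, i.e.\ $p>\widetilde p_c$, is exactly what makes the final absorption possible.

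\textbf{Step 1: a local $L^\infty$ bound in terms of a higher norm.} We use the energy estimates for truncations $(u-k)_+$ with cutoffs $\zeta$ supported in shrinking cylinders $Q^-_{\rho_n,\theta_n}$. The time-derivative term gives the quantity $\int (u^q)_t\,(u-k)_+\zeta^p$, which is controlled from below by $\sup_t\int_{B}\int_k^u (u^q)'(v)(v-k)\,dv$. Here $u^{q-1}$ is bounded below on $\{u>k\}$ when $q\ge1$. For $q<1$ we work on level sets where $u\le M$, and this is where local boundedness is used.

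The nonlocal part contributes the Gagliardo seminorm, plus a tail term with $\sup_{x\in \operatorname{supp}\zeta}\int_{\mathbb R^N\setminus B_{\rho_n}}(u-k)_+^{p-1}/|x-y|^{N+sp}\,dy$. Since $\rho_n\ge\rho/2$, this tail term is bounded by $\gamma 2^{n(N+sp)}\,\mathcal T/(\text{time length})$, up to the time factor.

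Combining the fractional Sobolev embedding with the parabolic interpolation, the standard iteration on $Y_n=|Q_n\cap\{u>k_n\}|/|Q_n|$ gives
\[
\sup_{Q^-_{\sigma\rho,\sigma\theta}}u\le \gamma\,(1-\sigma)^{-a}\Big[\Big(\frac{\theta}{\rho^{sp}}\Big)^{\frac{N}{\varkappa_r}}\fint\!\!\fint_{Q^-_{\rho,\theta}}u^{r}\Big]^{\frac{sp}{\varkappa_r}}+\Big(\frac{\theta}{\rho^{sp}}\Big)^{\frac1{q+1-p}}+\gamma\,\mathcal T^{1/q}.
\]
This holds for a suitable $r$, for instance $r=q+1$. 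The middle term appears because, when $\theta/\rho^{sp}$ is large compared to the intrinsic scale, the levels must absorb the mismatch between time and space scaling. The last term comes from choosing levels $k\ge \mathcal T^{1/q}$ so that the tail contribution is dominated.

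\textbf{Step 2: reduction of the integrability to $p-1$.} Write $M_\sigma=\sup_{Q^-_{\sigma\rho,\sigma\theta}}u$ and estimate $\fint u^r\le M_{\sigma'}^{\,r-p+1}\fint u^{p-1}$. The local integral satisfies $\fint_{Q^-_{\rho,\theta}} u^{p-1}\le \gamma\rho^{sp}\theta^{-1}\mathcal T$. Using Young's inequality, the $M_{\sigma'}$ factor is absorbed with coefficient $\tfrac12$. This requires the exponent of $M_{\sigma'}$ after raising to $sp/\varkappa_r$ to be strictly less than $1$. That condition reduces to $sp(p-1)+N(p-1-q)>0$, which is $\widetilde\varkappa>0$. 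The remaining power of $\mathcal T$ is $sp/\widetilde\varkappa$, and the power of $\rho^{sp}/\theta$ becomes $(N+sp)/\widetilde\varkappa$, matching \eqref{eqUp1}. The standard iteration lemma over $\sigma\in[1/2,1)$ then removes the $M_{\sigma'}$ term and yields \eqref{eqUp1}.

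\textbf{Main obstacle.} The delicate part is Step 1 in the fast-diffusion, nonlocal setting. We must keep the tail term uniformly controlled across the iteration, because the tail involves $u$ in the whole past interval and is not localized. We must also handle $(u^q)_t$ for $q<1$, where coercivity degenerates at large $u$. For the latter, the first attempt is to use the local boundedness assumption only qualitatively: run the iteration with the a priori finite $M$ and rely on the final constants being independent of $M$. A second issue is checking that the exponent bookkeeping in the Young absorption of Step 2 produces exactly $\widetilde\varkappa$ rather than some other combination. If $r=q+1$ does not close, we would instead run the iteration directly with a $u^{p-1}$-weighted measure of the level sets, since the powers must balance precisely at $p=\widetilde p_c$.
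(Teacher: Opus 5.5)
Your overall architecture matches the paper's: a De Giorgi $L^\infty$--$L^{q+1}$ bound, then $u^{q+1}\le M^{q+2-p}u^{p-1}$, then Young's inequality, then the iteration lemma on shrinking cylinders. Young's inequality is available exactly because $1-\frac{(q+2-p)sp}{sp(q+1)+N(p-1-q)}=\frac{\widetilde\varkappa}{sp(q+1)+N(p-1-q)}>0$, and your Step~2 is fine.

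The genuine gap is the treatment of the tail in Step~1. The quantity $\mathcal T$ controls the tail only in $L^1$ in time. The function $T(\tau):=\int_{\mathbb R^N\setminus B_{\rho/2}(x_0)}u_+^{p-1}(y,\tau)|y-x_0|^{-N-sp}\,dy$ is integrable on $(t_0-\theta,t_0)$ but need not be bounded. So your bound of the tail factor in the energy estimate by $\gamma 2^{n(N+sp)}\mathcal T/\theta$ is false pointwise in $\tau$.

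Choosing constant levels $k\ge\mathcal T^{1/q}$ is the device that works for $L^\infty$-in-time tails, not here. With constant levels you only get that the far-tail contribution is $\le\mathcal T\sup_\tau\int_{B_n}(u-k_{n+1})_+\,dx$. Using $(u-k_{n+1})_+\le 2^{n+2}k^{-1}(u-k_n)_+^2$, this becomes $\gamma 2^{n}(\mathcal T/k^{q})\,k^{q-1}\sup_\tau\int_{B_n}(u-k_n)_+^2\,dx$. That term is \emph{linear} in the previous energy, with a coefficient growing in $n$, since the space--time Sobolev embedding gives no superlinear gain for a supremum in time. No fixed largeness of $k/\mathcal T^{1/q}$ turns this into a recursion $Y_{n+1}\le Cb^nY_n^{1+\alpha}$, so the iteration as you describe it does not close.

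The missing idea is the time-dependent level; this is why Lemma~\ref{lemEnergy} allows an absolutely continuous $k(\tau)$. Fix a ball $B_R$ and set $T_R(\tau):=\int_{\mathbb R^N\setminus B_R(x_0)}u_+^{p-1}(y,\tau)|y-x_0|^{-N-sp}\,dy$. Let the levels increase in time at rate $\tilde C k^{1-q}T_R(\tau)$. This is the role of $\mathcal K(t)$ in the paper's $k_\sigma(t)$; the sign must give $k'\ge0$, so that the $k'$-term in \eqref{eqenergy} is nonpositive.

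Impose $k\ge\bar\epsilon M_1$, where $M_1$ is the supremum of $u$ on the larger cylinder and $\bar\epsilon\in(0,1)$ is small. Then $h(u^+_{k(\tau)})\ge\gamma^{-1}\bar\epsilon\,k^{q-1}u^+_{k(\tau)}$ for every $q>0$. Hence $-\iint k'(\tau)h(u^+_{k(\tau)})\zeta^p\le-\gamma^{-1}\tilde C\bar\epsilon\iint u^+_{k(\tau)}\zeta^p\,T_R(\tau)$. This cancels the far-tail term $\gamma\big(\frac{R}{R-r}\big)^{N+sp}\iint u^+_{k(\tau)}\zeta^p\,T_R(\tau)$ pointwise in time once $\tilde C$ is large. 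The condition $k\ge\big(4\tilde C\int_{t_0-\Lambda}^{t_0}T_R\,d\tau\big)^{1/q}$ keeps the levels comparable to $k$, and it is what produces the term $\mathcal T^{1/q}$. The part of the tail inside $B_R\setminus B_{r_\sigma}$ is bounded through $M_1\le k/\bar\epsilon$.

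This is also how the case $q<1$ actually works, not ``qualitatively'' with constants independent of $M$. The constraint $k\ge\bar\epsilon M_1$ leaves a term $\bar\epsilon M_1$ on the right-hand side, which is absorbed by the same iteration lemma you use in Step~2.

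A minor point: your Step~1 display has the wrong scaling. It should read $\big(\frac{\rho^{sp}}{\theta}\big)^{N/\varkappa_r}\big(\rho^{-N}\theta^{-1}\iint_{Q^-_{\rho,\theta}}u^r\big)^{sp/\varkappa_r}$ with $\varkappa_r=spr+N(p-1-q)$. The exponents you report after Step~2 are nevertheless correct.
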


The appearance of the exponent $\widetilde p_c$ is quite natural.
Critical exponents play an important role in the theory of nonlinear diffusion equations, as they  separate qualitatively different regimes in the behavior of solutions. For the fractional $p$-Laplacian evolution equation in the fast diffusion range $1< p< 2$, V\'azquez \cite{Vaz2021} introduced a critical exponent $p_1$, characterized as the unique solution of
$$
sp_1(p_1-1)+N(p_1-2)=0.
$$
This exponent determines the transition between two different regimes of spatial decay of the fundamental solution. In particular, the decay rate changes from $r^{-sp/(2-p)}$ for $p < p_1$ to $r^{-(N+sp)}$ for $p> p_1$. A related role is played by critical exponents in the theory of porous-medium-type equations, where they distinguish different regimes of diffusion and the qualitative behavior of solutions \cite{PabQuiRodVaz2011, Vaz2014}.

Motivated by these results, we introduce the critical exponent $\widetilde p_c$ for the doubly nonlinear nonlocal equation considered in this paper. Its role is closely connected with the local boundedness of non-negative subsolutions.  More precisely, the De Giorgi iteration provides a preliminary estimate, while the subsequent interpolation argument requires the positivity of
$$
\widetilde \varkappa =s p(p-1)+N(p-q-1).
$$
This condition is equivalent to $p>\widetilde p_c$ in the range $p<q+1$. Consequently, our approach yields the local upper estimate for subsolutions \eqref{eqUp1}, and hence the integral Harnack-type estimate \eqref{eqHarnack2}. Thus,  the range $\widetilde p_c<p<q+1$
arises naturally in the proof of our main result.  The critical exponent $\widetilde p_c$ also connects the previously studied cases. When $q=1$, its defining equation reduces to that of $p_1$ introduced by V\'azquez. In the  case $p=2$, the critical relation becomes $q< (N+2s)/N$, corresponding, under the formal transformation $v=u^q$, to the  exponent $m_1=N/(N+2s)$ for the fractional porous medium equation. These connections indicate that $\widetilde p_c$ provides a natural critical threshold for the interaction between the temporal nonlinearity and the nonlocal diffusion.  

The validity of our main result for $p_c < p\leqslant \widetilde p_c$, however, remains an open question. It is also unclear  whether an elliptic Harnack inequality analogous to that established by DiBenedetto, Gianazza, and Vespri \cite{DiBGiaVes2012} in the local case  holds for the nonlocal equation considered here.

The paper is organized as follows. Section~\ref{sec:aux} introduces the notation and the definition of a weak solution to \eqref{eq1.1} and presents preliminary results, including local energy estimates and De Giorgi-type lemmas. Section~\ref{sec:Exp} establishes the expansion of positivity, a key ingredient in the proof of the main result. Sections~\ref{sec:Weak} and~\ref{sec:Upper} are devoted to the proofs of Propositions~\ref{pr1.1} and~\ref{pr1.2}, respectively. Finally, Section~\ref{sec:Harnack} combines these estimates to prove Theorem~\ref{th1.1}.

\section{Auxiliary Propositions}\label{sec:aux}

\subsection{Definitions and Notations}

We first introduce some notation that will be used throughout the paper. For any $a$, $b > 0$ we  write $a\asymp b$ if there exists $\gamma >0$ such that
$$\gamma^{-1}\, b \leqslant a \leqslant \gamma\,b.$$
For  
$\rho$,  $\theta >0$ define the cylinders
$$Q^+_{\rho, \theta}(x_0, t_0):=B_\rho(x_0)\times (t_0, t_0+\theta ),\quad Q^-_{\rho, \theta}(x_0, t_0):=B_\rho(x_0)\times (t_0-\theta, t_0),$$
and
$$Q_{\rho, \theta}(x_0, t_0):=Q^-_{\rho, \theta}(x_0, t_0)\cup Q^+_{\rho, \theta}(x_0, t_0).$$

\subsubsection{\textbf{Function Spaces}}

Let $p>1$ and $s\in(0,1)$. We define the fractional Sobolev space $W^{s,p}(\mathbb{R}^N)$ by
$$W^{s, p}(\mathbb{R}^N):=\big\{ u\in L^p(\mathbb{R}^N) : \int\limits_{\mathbb{R}^N}\int\limits_{\mathbb{R}^N}\frac{|u(x)-u(y)|^p}{|x-y|^{N+sp}}\,dx dy < \infty\big\},$$
equipped with the norm 
$$||u||_{W^{s, p}(\mathbb{R}^N)}:= \Big(\int\limits_{\mathbb{R}^N}|u|^p dx \Big)^{\frac{1}{p}}+
\bigg(\int\limits_{\mathbb{R}^N}\int\limits_{\mathbb{R}^N}\frac{|u(x)-u(y)|^p}{|x-y|^{N+sp}}\,dx dy\bigg)^{\frac{1}{p}}.$$
For a domain $\Omega\subset\mathbb{R}^N$, the corresponding fractional
Sobolev space $W^{s, p}(\Omega)$ is defined analogously. We also denote
$$W^{s, p}_0(\Omega):=\big\{ u\in W^{s, p} (\mathbb{R}^N): u=0 \quad \text{a.e. in}\quad\mathbb{R}^N\setminus\Omega\big\}.$$
In what follows, we need the following parabolic embedding, which is a standard consequence of the elliptic Sobolev embedding theorem (see e.g. \cite[Proposition A.3]{Lia2024}).

\begin{lemma}\label{lemEmbedding}
Let $p>1$ and $s\in(0,1)$, with $sp<N$. For any function $u\in L^p(t_1, t_2; W^{s, p}(B_r(x_0)))\cap L^\infty (t_1, t_2; L^{2}(B_r(x_0)))$, compactly supported 
in $B_{r(1-\sigma)}(x_0)$ with some $\sigma \in (0, 1)$ and for all $t\in (t_1, t_2)$, there holds
\begin{equation*}
\begin{aligned}
&\int\limits^{t_2}_{t_1}\int\limits_{B_r(x_0)}|u|^{p\left(1+\frac{2 s}{N}\right)}dx dt
\leqslant 
\gamma \biggr(\sup\limits_{t\in (t_1, t_2)}\int\limits_{B_r(x_0)}|u|^{2}\,dx\biggr)^{\frac{sp}{N}}
\\&\times
\biggr(\int\limits^{t_2}_{t_1}\int\limits_{B_r(x_0)}\int\limits_{B_r(x_0)}
\frac{|u(x, t)-u(y, t)|^p}{|x-y|^{N+sp}}dx dy dt+\frac{1}{\sigma^{N+sp} r^{sp}}\int\limits^{t_2}_{t_1}\int\limits_{B_r(x_0)}|u|^p dx dt\biggr),
\end{aligned}
\end{equation*}
where $\gamma>0$ depends only on $N$, $s$ and  $p$.
\end{lemma}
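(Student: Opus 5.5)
This is the standard parabolic fractional Sobolev embedding; the plan is to obtain it by applying the elliptic fractional Sobolev inequality at each fixed time, interpolating with the $L^2$ bound, and integrating in $t$.

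\emph{Step 1: elliptic inequality at a fixed time.} Fix $t\in(t_1,t_2)$ and write $v=u(\cdot,t)$. Since $v$ vanishes outside $B_{r(1-\sigma)}(x_0)$, extend it by zero to $\mathbb{R}^N$. The fractional Sobolev inequality on $\mathbb{R}^N$ gives, with $p^*=Np/(N-sp)$,
$$
\|v\|_{L^{p^*}(\mathbb{R}^N)}^p\leqslant\gamma[v]^p_{W^{s,p}(\mathbb{R}^N)}.
$$

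\emph{Step 2: localizing the Gagliardo seminorm.} Split the double integral over $\mathbb{R}^N\times\mathbb{R}^N$ into the part over $B_r\times B_r$ and twice the part over $B_r\times(\mathbb{R}^N\setminus B_r)$. In the cross term $v(y)=0$, and $x\in B_{r(1-\sigma)}$ while $y\notin B_r$, so $|x-y|\geqslant\sigma r$. Hence
$$
\int_{\mathbb{R}^N\setminus B_r}\frac{dy}{|x-y|^{N+sp}}\leqslant\gamma(\sigma r)^{-sp}\leqslant\frac{\gamma}{\sigma^{N+sp}r^{sp}},
$$
using $\sigma<1$. This produces exactly the lower-order term in the statement.

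\emph{Step 3: interpolation.} Set $\kappa=p(1+2s/N)$. Hölder's inequality interpolates $L^{\kappa}$ between $L^2$ and $L^{p^*}$: choose $\lambda$ with $\kappa=\lambda\cdot 2+(1-\lambda)p^*$, and require the $L^{p^*}$ part to enter with power exactly $p$. Writing
$$
\int|v|^\kappa=\int|v|^{a}|v|^{\kappa-a},\qquad a=\frac{2sp}{N},
$$
the conjugate exponents are $2/a=N/(sp)$ and $N/(N-sp)$. Then $(\kappa-a)\cdot N/(N-sp)=p\cdot N/(N-sp)=p^*$, since $\kappa-a=p$. Therefore
$$
\int|v|^{\kappa}\leqslant\Big(\int|v|^2\Big)^{sp/N}\Big(\int|v|^{p^*}\Big)^{(N-sp)/N}=\Big(\int|v|^2\Big)^{sp/N}\|v\|^p_{p^*}.
$$

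\emph{Step 4: conclusion.} Bound $\int|v|^2$ by its supremum over $t\in(t_1,t_2)$, insert the estimate from Steps 1–2, and integrate in $t$ over $(t_1,t_2)$. This gives the stated inequality.

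The main point needing care is the tail estimate in Step 2, which is what produces the $\sigma^{-(N+sp)}r^{-sp}$ factor. The exponent $\sigma^{-sp}$ would already suffice there, and the weaker power $\sigma^{-(N+sp)}$ in the statement follows because $\sigma<1$. Throughout, $sp<N$ is needed so that $p^*$ is finite and the Sobolev inequality applies.
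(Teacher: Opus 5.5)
Your proof is correct and is the standard argument behind the result the paper cites without proof (Liao's Proposition A.3): extend by zero, apply the fractional Sobolev inequality on $\mathbb{R}^N$, control the cross term using $v=0$ off $B_{r(1-\sigma)}$, interpolate $L^{p(1+2s/N)}$ between $L^2$ and $L^{p^*}$ via H\"older with exponents $N/(sp)$ and $N/(N-sp)$, and integrate in time. Your tail bound gives the sharper factor $\sigma^{-sp}$; the cruder estimate $|x-y|\geqslant\sigma|y-x_0|$ is what produces $\sigma^{-(N+sp)}$, and the stated weaker power follows from yours since $\sigma<1$.
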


\subsubsection{\textbf{Notion of Weak Solution}}

\begin{definition}
A measurable function
$u:\mathbb{R}^N\times(0,T]\rightarrow\mathbb{R}$ satisfying
\begin{equation}\label{eqDefinition1}
u\in C_{loc}(0, T; L^{q+1}_{loc}(\Omega))\cap L^p_{loc}(0, T; W^{s, p}_{loc}(\Omega))
\end{equation}
is called a local weak sub(super)-solution to \eqref{eq1.1}, with
the kernel $K$ satisfying \eqref{eq1.3}, if the following conditions
hold. For every compact subset $E\subset\Omega$ and every
subinterval $[t_1,t_2]\subset(0,T]$, we have
\begin{equation}\label{eqDefinition2}
\int\limits_{t_1}^{t_2}\int\limits_{\mathbb{R}^N}\frac{|u(x, t)|^{p-1}}{1+|x|^{N+sp}}\,dx dt <\infty,
\end{equation}
and
\begin{equation}\label{eqDefinition3}
\begin{aligned}
&\int\limits_{E}|u|^{q-1} u\varphi dx\Big|^{t_2}_{t_1}-\int\limits^{t_2}_{t_1}\int\limits_{E}|u|^{q-1}u \varphi_t\,dx dt
\\&+\int\limits^{t_2}_{t_1}\int\limits_{\mathbb{R}^N}\int\limits_{\mathbb{R}^N}K(x, y, t) |u(x, t)-u(y, t)|^{p-2}(u(x, t)-u(y, t))(\varphi(x, t)-\varphi(y, t))\,dx dy dt\leqslant ( \geqslant) 0,
\end{aligned}
\end{equation}
for every nonnegative test function
$$\varphi \in W^{1,q+1}_{loc}(0, T; L^{q+1}(E))\cap L^p_{loc}(0, T; W^{s, p}_0(E)).$$

A function that is both a local weak sub-solution and a local weak
super-solution is called a local weak solution.
\end{definition}

We also introduce the following tail term, which naturally arises in the nonlocal setting:
\begin{equation}\label{eqTail}
\operatorname{Tail}\big(u, Q_{\rho, \theta}(x_0, t_0)\big):= \biggr(\int\limits^{t_0+\theta}_{t_0-\theta}\,\,\int\limits_{\mathbb{R}^N\setminus B_\rho(x_0)}\frac{|u(x, t)|^{p-1}}{|x-x_0|^{N+sp}}\,dx\,dt\biggr)^{\frac{1}{q}},
\end{equation}

\subsection{Algebraic Lemma}

In what follows, we will need the following inequality
\begin{lemma}\label{lemAlgebraic}
There hold
\begin{equation}\label{eqalg1}
 ||a|^{p-1}a -|b|^{p-1} b|\asymp \big(|a|+|b|\big)^{p-1}|a-b|,\quad p>0, \quad a, b\in \mathbb{R},
 \end{equation}

\end{lemma}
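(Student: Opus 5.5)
The claim is that $||a|^{p-1}a-|b|^{p-1}b| \asymp (|a|+|b|)^{p-1}|a-b|$ for $p>0$, with constants depending only on $p$. The plan is to reduce by symmetry and homogeneity to a one-variable statement and then check it by continuity and compactness.

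First the trivial cases. If $a=b$, both sides vanish. So assume $a\neq b$. Both sides are symmetric under swapping $a$ and $b$, and invariant under $(a,b)\mapsto(-a,-b)$. We may therefore assume $|a|\geqslant|b|$ and $a>0$. Both sides are also positively homogeneous of degree $p$, since $|\lambda a|^{p-1}\lambda a=\lambda^{p}|a|^{p-1}a$ for $\lambda>0$. Dividing by $a^p$ reduces the claim to $a=1$ and $b=t\in[-1,1)$, where we must show
$$
F(t):=\frac{1-|t|^{p-1}t}{(1+|t|)^{p-1}(1-t)}
$$
is bounded above and below by positive constants depending only on $p$.

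Next, positivity and continuity of $F$. The map $t\mapsto|t|^{p-1}t$ is strictly increasing on $\mathbb{R}$ for $p>0$. Hence $1-|t|^{p-1}t>0$ for $t<1$, and $F$ is continuous and positive on $[-1,1)$.

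The only issue is the endpoint $t\to1$, which I expect to be the main (mild) obstacle. Near $t=1$ we have $t>0$, so $|t|^{p-1}t=t^p$. The mean value theorem gives $1-t^p=p\xi^{p-1}(1-t)$ for some $\xi\in(t,1)$. Therefore $F(t)\to p\,2^{1-p}$ as $t\to1^-$, and $F$ extends continuously and positively to $[-1,1]$. This works for all $p>0$, because for $t\geqslant1/2$ the factor $\xi^{p-1}$ lies between $\min(1,2^{1-p})$ and $\max(1,2^{1-p})$.

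By compactness, $F$ attains a positive minimum and a finite maximum on $[-1,1]$; call them $\gamma^{-1}$ and $\gamma$. This yields the stated equivalence with $\gamma=\gamma(p)$.

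If one prefers explicit constants, the same argument can be split into two cases. When $ab\leqslant0$, the left side is $|a|^p+|b|^p\asymp(|a|+|b|)^p$ and $|a-b|=|a|+|b|$. When $ab>0$, use $a^p-b^p=p\int_b^a\tau^{p-1}d\tau$: the integrand is bounded above by $\max(a^{p-1},b^{p-1})$ and below by $\min(a^{p-1},b^{p-1})$. For $p\geqslant1$ this integral is easily compared to $a^{p-1}(a-b)$ with $b\leqslant a$. For $p<1$, the lower bound $a^{p-1}(a-b)$ is immediate. The upper bound splits according to whether $b\leqslant a/2$, in which case $a-b\geqslant a/2$ and $a^p\leqslant 2a^{p-1}(a-b)$, or $b>a/2$, in which case the integrand is at most $(a/2)^{p-1}$.
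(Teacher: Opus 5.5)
The paper states this lemma without proof and treats it as a standard inequality, so there is no argument of the authors' to compare with. Your proof is correct and complete.

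The reduction is valid. You use the symmetries $(a,b)\mapsto(b,a)$ and $(a,b)\mapsto(-a,-b)$ together with degree-$p$ homogeneity to reduce to $F$ on $[-1,1)$. Positivity follows from the strict monotonicity of $t\mapsto|t|^{p-1}t$. The limit $F(t)\to p\,2^{1-p}$ as $t\to1^-$ gives a positive continuous extension to $[-1,1]$. Compactness then yields constants $\gamma=\gamma(p)$, as required.

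Two small points concern the optional explicit-constant version.
\begin{enumerate}
\item For $p\geqslant1$, the lower bound cannot come from bounding the integrand below by $\min(a^{p-1},b^{p-1})=b^{p-1}$, since this degenerates as $b\to0$. You need the same split $b\leqslant a/2$ versus $b>a/2$ that you use for the upper bound when $p<1$. Alternatively, integrate only over $[\frac{a+b}{2},a]$, which gives $a^p-b^p\geqslant p\,\min\bigl(1,2^{1-p}\bigr)\,a^{p-1}\frac{a-b}{2}$.
\item For $p<1$, the degenerate case $a=b=0$ needs the convention that the right-hand side is $0$.
\end{enumerate}

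The paper only uses the lemma with $q$ in place of $p$, in the form $\int_b^a z^{q-1}\,dz\asymp(a+b)^{q-1}(a-b)$ for $0\leqslant b\leqslant a$. This is covered by your same-sign case.
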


\subsection{Energy Estimates}

We refer the reader to \cite[Lemma 2.4]{SavSkrYev2026arxiv} for the proof of the following lemma.

\begin{lemma}\label{lemEnergy}
Let $u$ be a local weak sub(super)-solution to \eqref{eq1.1}-\eqref{eq1.3} in $\Omega_T$ and let $k(t)$ be absolutely continuous in $(0, T)$.
Then for every cylinder $Q^-_{8r, 8\eta}:=Q^-_{8r, 8\eta}(x_0, t_0)\subset \Omega_T$ and every piecewise smooth cutoff function $\zeta(x, t)$ compactly supported in $B_r:=B_r(x_0)$ for all
$t\in (t_0-\eta, t_0)$ there holds
\begin{equation}\label{eqenergy}
\begin{aligned}
\int\limits_{B_r\times\{\tau\}}\!\!\!g(u^{\pm}_{k(\tau)})\zeta^p\,dx\Big|^{t_0}_{t_0-\eta}&+\int\limits_{t_0-\eta}^{t_0}\,\int\limits_{B_r}\int\limits_{B_r}
\frac{|u^{\pm}_{k(\tau)}(x, \tau)-u^{\pm}_{k(\tau)}(y, \tau)|^p}{|x-y|^{N+sp}}\min[\zeta(x, \tau), \zeta(y, \tau)]^p\,dx dy d\tau\\
&+\int\limits^{t_0}_{t_0-\eta}\,\int\limits_{B_r}u^{\pm}_{k(\tau)}(x, \tau)\zeta^p(x, \tau)\,\bigg(\int\limits_{B_r}\frac{\big[u^{\mp}_{k(\tau)}(y, \tau)\big]^{p-1}}{|x-y|^{N+sp}}dy\bigg)\,dx\,d\tau\\
&\leqslant \mp\iint\limits_{Q_{r, \eta}}k'(\tau)h(u^{\pm}_{k(\tau)})\zeta^p(x, \tau) dxd\tau+\gamma \iint\limits_{Q_{r, \eta}}g(u^{\pm}_{k(\tau)})\zeta^{p-1}(x, \tau)|\zeta_\tau| dxd\tau\\
&+\gamma\int\limits^{t_0}_{t_0-\eta}\,\int\limits_{B_r}\int\limits_{B_r}\max[u^{\pm}_{k(\tau)}(x, \tau), u^{\pm}_{k(\tau)}(y, \tau)]^p\frac{|\zeta(x, \tau)-\zeta(y, \tau)|^p}{|x-y|^{N+sp}}\,dxdyd\tau\\
&+\int\limits^{t_0}_{t_0-\eta}\,\int\limits_{B_r}u^{\pm}_{k(\tau)}(x, \tau)\zeta^p(x, \tau)\bigg(\int\limits_{\mathbb{R}^N\setminus B_r}\frac{\big[u^{\pm}_{k(\tau)}(y, \tau)\big]^{p-1}}{|x-y|^{N+sp}} dy\bigg) dx d\tau.
\end{aligned}
\end{equation}
Here
$$u^{\pm}_{k(\tau)}:=(u-k(\tau))_{\pm},\quad g(u^{\pm}_{k(\tau)}):=\int\limits^{u^{\pm}_{k(\tau)}}_0|z\pm k(\tau)|^{q-1}\,z dz,\quad h(u^{\pm}_{k(\tau)}):=\int\limits^{u^{\pm}_{k(\tau)}}_0|z\pm k(\tau)|^{q-1} dz.$$

\end{lemma}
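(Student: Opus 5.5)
The plan is the standard one for nonlocal energy estimates: test the weak formulation \eqref{eqDefinition3} with $\varphi=u^{\pm}_{k(\tau)}\zeta^p$, treat the time part by a chain-rule computation that produces $g$ and $h$, and split the nonlocal part into a local double integral over $B_r\times B_r$ and a tail part. Since $u$ only has the time regularity in \eqref{eqDefinition1}, $\varphi$ is not directly admissible. I would therefore first rewrite \eqref{eqDefinition3} in Steklov-averaged form, or equivalently mollify in time. I would carry out all computations for the regularized function, integrate over $(t_0-\eta,\tau)$, and pass to the limit at the end, using $u\in C_{loc}(0,T;L^{q+1}_{loc})$ for the boundary terms. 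The test function is nonnegative and belongs to $L^p(W^{s,p}_0(B_r))$ because $\zeta$ is compactly supported in $B_r$, so for a sub-solution we use $+u^+_k\zeta^p$ and for a super-solution $-u^-_k\zeta^p$ with the reversed inequality. This explains the sign $\pm/\mp$ in \eqref{eqenergy}.

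\emph{Time term.} Formally,
$$
\partial_t(|u|^{q-1}u)\,(\pm u^{\pm}_{k})=\partial_t g(u^{\pm}_k)\pm k'(\tau)\,q\,h(u^{\pm}_k)\cdot(\text{sign-adjusted}),
$$
since $\frac{d}{dt}g(u^\pm_{k(t)})=|u|^{q-1}u^{\pm}_k\,\partial_t(u\mp\ldots)$. Concretely, with $w=u^\pm_k$ and $u=k\pm w$ on the support, the identity $\partial_t(|u|^{q-1}u)=q|u|^{q-1}(\pm w_t+k')$ gives
$$
\partial_t(|u|^{q-1}u)\,w=\pm\, q\,\partial_t g(w)+q\,k'|u|^{q-1}w.
$$
The last term is rewritten via $h$ after an integration by parts in $w$. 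I would then move $\zeta^p$ across the time derivative. This produces $\int g\zeta^p\big|$ and the term $\gamma\iint g\,\zeta^{p-1}|\zeta_\tau|$. The $k'h$ term appears exactly as written, up to the harmless factor $q$ absorbed into the normalization. Tracking signs and the factor $q$ carefully here is routine but fiddly.

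\emph{Nonlocal term.} I would split $\mathbb R^N\times\mathbb R^N$ into $B_r\times B_r$ and $2\,(B_r\times(\mathbb R^N\setminus B_r))$, using the symmetry of $K$; the product $(\mathbb R^N\setminus B_r)^2$ gives no contribution since $\varphi=0$ there.
\begin{itemize}
\item On $B_r\times B_r$, I would use the pointwise inequality of De Giorgi type, as in the nonlocal literature (Cozzi; Di Castro--Kuusi--Palatucci). This is the step I expect to require the most care. The inequality is
$$
|a-b|^{p-2}(a-b)\big(w(a)\zeta_1^p-w(b)\zeta_2^p\big)\ \ge\ c|w(a)-w(b)|^p\min(\zeta_1,\zeta_2)^p-\gamma\max(w(a),w(b))^p|\zeta_1-\zeta_2|^p+\big(\text{good cross terms}\big),
$$
with $w=(\cdot-k)_+$. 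The good cross terms arise when one point lies in $\{u>k\}$ and the other in $\{u<k\}$; they give $w(a)\zeta_1^p\,[(u(y)-k)_-]^{p-1}$. I would prove it by cases on the positions of $a$ and $b$ relative to $k$, using Young's inequality with an $\varepsilon$ on the $\zeta$ difference. Combined with \eqref{eq1.3}, this gives the second, third and fifth terms of \eqref{eqenergy}.
\item On $B_r\times(\mathbb R^N\setminus B_r)$, I would use $|u(x)-u(y)|^{p-2}(u(x)-u(y))\ge -[(u(y)-k)_+]^{p-1}$ whenever $u(x)>k$. This yields the tail term on the right after using $K\le K_2|x-y|^{-N-sp}$; the nonnegative remainder is discarded.
\end{itemize}

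Finally, I would collect all terms, take the supremum or evaluate at $\tau=t_0$, and let the Steklov parameter tend to zero. This uses Lemma \ref{lemAlgebraic} to control $|u|^{q-1}u$ differences in $L^1$ for the convergence of the time terms. The main obstacle is justifying the time-derivative manipulations for a time-dependent level $k(\tau)$ and non-smooth $u$ in the singular range $q<1$, where $|z\pm k|^{q-1}$ may blow up. I would handle this by noting that $g$ and $h$ are still locally integrable, since $q-1>-1$, and by working with the Steklov-averaged quantity $[\,|u|^{q-1}u\,]_h$ together with the convexity inequality
$$
\partial_t[\,|u|^{q-1}u\,]_h\,(u-k)_+\ \ge\ \partial_t \tilde g_h,
$$
which passes to the limit without requiring pointwise differentiability of $u$.
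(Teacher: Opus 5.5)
The paper gives no proof of this lemma; it cites \cite[Lemma 2.4]{SavSkrYev2026arxiv}. Your overall route is the standard one: test with $u^\pm_{k}\zeta^p$, regularize in time, use the case-by-case pointwise inequality on $B_r\times B_r$, and bound $|u(x)-u(y)|^{p-2}(u(x)-u(y))\ge-[(u(y)-k)_+]^{p-1}$ on $B_r\times(\mathbb{R}^N\setminus B_r)$. The whole nonlocal part is correct. The gap is exactly the step you flag, the time term with a moving level: the regularization you propose would not produce the term $\mp\iint k'h\zeta^p$.

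\emph{Formal identity.} It should read $\partial_t(|u|^{q-1}u)\,u^\pm_{k}=\pm q\,\frac{d}{dt}\big[g(u^\pm_{k(t)})\big]+q\,k'(t)\,h(u^\pm_{k(t)})$. Here the total derivative also differentiates the integrand $|z\pm k(t)|^{q-1}$ of $g$, and integration by parts in $z$ gives $\partial_k g=\pm\big(|u|^{q-1}u^\pm_k-h(u^\pm_k)\big)$. This is what turns $qk'|u|^{q-1}w$ into $qk'h$. Your display is right only if $\partial_t g$ means $g'(w)w_t$ with $k$ frozen.

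\emph{Regularized inequality.} More importantly, $\partial_t[|u|^{q-1}u]_h(u-k)_+\ge\partial_t\tilde g_h$ is the constant-level convexity inequality. Set $v=|u|^{q-1}u$ and $\Psi_k(v)=\int_{|k|^{q-1}k}^{v}\big(|s|^{\frac1q-1}s-k\big)_+\,ds=q\,g(u^+_k)$, which is convex in $v$ with derivative $(u-k)_+$. Convexity can only be applied with the level frozen at time $t$, and undoing the freezing leaves a remainder. Use the backward Steklov average $[v]_{\bar h}(t)=\frac1h\int_{t-h}^t v\,d\tau$; the forward average gives the inequality in the wrong direction. Then
\[
\partial_t[v]_{\bar h}(t)\,\big(u(t)-k(t)\big)_+\ \ge\ \frac{\Psi_{k(t)}(v(t))-\Psi_{k(t-h)}(v(t-h))}{h}+\frac1h\int_{t-h}^{t}k'(\tau)\big(v(t-h)-|k(\tau)|^{q-1}k(\tau)\big)_+\,d\tau .
\]
The first term is a backward difference quotient of $q\,g(u^+_{k(\cdot)})\in C(L^1_{loc})$. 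After summation by parts it gives the boundary term and the $\zeta_\tau$ term; the room $Q^-_{8r,8\eta}$ lets you go slightly below $t_0-\eta$. The second term converges in $L^1_{loc}$ to $k'(t)\big(|u|^{q-1}u-|k|^{q-1}k\big)_+=q\,k'(t)\,h(u^+_{k(t)})$. This holds because $k'\in L^1$ and $(\tau,s)\mapsto\big(v(s)-|k(\tau)|^{q-1}k(\tau)\big)_+$ is continuous into $L^1(B_r)$ by \eqref{eqDefinition1}.

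This remainder is the sole source of $\mp\iint k'h\zeta^p$, and it cannot be discarded since $k'$ has no sign. The super-solution case is symmetric, using $\Psi^-_k(v)=\int_v^{|k|^{q-1}k}\big(k-|s|^{\frac1q-1}s\big)_+\,ds=q\,g(u^-_k)$, whose $k$-derivative is $q\,h(u^-_k)$. With this correction your argument goes through.
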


\subsection{De Giorgi Type Lemma}
Construct the cylinder 
$$\mathcal{Q}:=B_R(x_0)\times (T_1, T_2).$$
The following lemma can be found, for example, in \cite[Lemma 2.5]{SavSkrYev2026arxiv}. For the case $q=1$ we refer the reader to \cite[Lemma 3.1]{Lia2024a}.

\begin{lemma}\label{lemDeGiorgi}
Let $u$ be a  local weak super-solution to \eqref{eq1.1}-\eqref{eq1.3} with $q> 0$, $p> 1$, $u\geqslant 0$ in $\Omega_T$. Fix $k>0$ and let $\rho\in (0, \frac{1}{2}R)$, $\theta >0$ be the numbers such that $Q_{\rho, \theta}:=Q^-_{\rho, \theta}(x_0, t_0) \subset \mathcal{Q}$. There exist numbers $C>1$, $\nu \in (0, 1)$ depending only on the data and $r$, $\theta$, $k$ such that if
\begin{equation}\label{eqDeGiorgi2}
\big|Q_{\rho, \theta}\cap \big\{u \leqslant k\big\}\big|\leqslant \nu |Q_{\rho, \theta}|,
\end{equation}
then either
\begin{equation}\label{eqDeGiorgi3}
C\,\operatorname{Tail}(u_{-}, \mathcal{Q})\geqslant k,
\end{equation}
or
\begin{equation}\label{eqDeGiorgi4}
u \geqslant \frac{1}{4}\,k,\quad \text{in}\quad Q_{\frac{1}{2}\rho, \frac{1}{2}\theta}.
\end{equation}
\end{lemma}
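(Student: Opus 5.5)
The plan is the standard De Giorgi iteration on lower truncations, built from the energy estimate of Lemma~\ref{lemEnergy} (with a constant level, so the $k'$-term vanishes) and the parabolic embedding of Lemma~\ref{lemEmbedding}. We set
$$k_j:=\frac k4+\frac{3k}{4}2^{-j},\qquad \rho_j:=\frac\rho2+\frac{\rho}{2^{j+1}},\qquad \theta_j:=\frac\theta2+\frac{\theta}{2^{j+1}},$$
and $Q_j:=Q^-_{\rho_j,\theta_j}(x_0,t_0)$, together with intermediate radii $\tilde\rho_j:=\frac12(\rho_j+\rho_{j+1})$ and $\tilde\theta_j:=\frac12(\theta_j+\theta_{j+1})$. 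We take cutoffs $\zeta_j$ equal to $1$ on $Q_{j+1}$, supported in $\tilde Q_j:=Q^-_{\tilde\rho_j,\tilde\theta_j}$, with $|\nabla\zeta_j|\le\gamma 2^j/\rho$ and $|\partial_t\zeta_j|\le \gamma 2^j/\theta$. The iterated quantity is $Y_j:=|Q_j\cap\{u<k_j\}|/|Q_j|$, and the goal is $Y_j\to0$, which gives $u\ge k/4$ on $Q^-_{\frac12\rho,\frac12\theta}$.

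The first step is to make the energy estimate with $u^-_{k_j}$ and $\zeta_j$ two-sided in terms of $k$. By Lemma~\ref{lemAlgebraic} we have $g(u^-_{k})\asymp (u+k)^{q-1}(u-k)_-^2$ on $\{u<k\}$, using $u\ge0$. For $q\ge1$ we use $(u+k)^{q-1}\ge k^{q-1}$, and for $q<1$ we use $(u+k)^{q-1}\ge(2k)^{q-1}$. In both cases
$$g(u^-_{k_j})\ge\gamma\, k^{q-1}(u^-_{k_j})^2 .$$
Conversely, on the support we have $g(u^-_{k_j})\le\gamma k^{q+1}\chi_{\{u<k_j\}}$ and $u^-_{k_j}\le k$. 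The right-hand side of \eqref{eqenergy} is then estimated as follows.
\begin{itemize}
\item The time-derivative term is at most $\gamma 2^j k^{q+1}\theta^{-1}|Q_j\cap\{u<k_j\}|$.
\item The nonlocal gradient-of-cutoff term is at most $\gamma 2^{jp}k^p\rho^{-sp}|Q_j\cap\{u<k_j\}|$.
\item In the tail term we use $u^-_{k_j}(y)\le k+u_-(y)$ and $|x-y|\ge 2^{-j-2}\rho$, comparable to $|y-x_0|$ outside $B_{\rho_j}$. This gives $\gamma 2^{j(N+sp)}k\,|\{u<k_j\}\cap Q_j|$ times $k^{p-1}\rho^{-sp}$ plus a sup-in-time tail of $u_-$.
\end{itemize}
Here the alternative \eqref{eqDeGiorgi3} enters: if it fails, the tail integral over $\mathcal Q$ is bounded by $(k/C)^q$, and we absorb it into a term of the same homogeneity as the others, with constants depending on $\rho,\theta,k$ as the statement allows.

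The second step applies Lemma~\ref{lemEmbedding} to $v:=u^-_{k_j}\zeta_j$ on $Q_j$. The left side of the embedding is bounded below by $(k_j-k_{j+1})^{p(1+2s/N)}|Q_{j+1}\cap\{u<k_{j+1}\}|$. The right side is controlled by $\sup_t\int v^2\le\gamma k^{1-q}\cdot(\text{energy})$ and the $W^{s,p}$ seminorm bounded by the energy. This yields a recursion
$$Y_{j+1}\le \gamma\, b^j\, \Lambda(\rho,\theta,k)\, Y_j^{1+\frac{sp}{N}},\qquad b=b(\text{data})>1,$$
where $\Lambda$ collects the mismatched scalings $k^{q+1}/\theta$ versus $k^p/\rho^{sp}$, that is, a power of $\max\{\theta k^{p-q-1}\rho^{-sp},\ \rho^{sp}k^{q+1-p}\theta^{-1}\}$. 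The fast geometric convergence lemma then gives $Y_j\to0$ provided $Y_0\le\nu:=\gamma^{-N/sp}b^{-N^2/(sp)^2}\Lambda^{-N/sp}$, and this is exactly hypothesis \eqref{eqDeGiorgi2}.

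The main obstacle is the bookkeeping of these non-matching homogeneities in $k$: the time part scales like $k^{q+1}$ and the diffusion part like $k^p$. This is why $\nu$ is allowed to depend on $\rho,\theta,k$ through $\theta k^{p-q-1}\rho^{-sp}$. A second delicate point is the tail term, which must be reduced to the time-integrated $\operatorname{Tail}(u_-,\mathcal Q)$ of \eqref{eqTail} (power $1/q$) rather than a sup-in-time tail. I would handle it by keeping $\int_{B}u^-_{k_j}\zeta^p\,dx\le k|\{u<k_j\}(\cdot,t)|$ pointwise in time and integrating the tail in $t$. This produces a factor $k\cdot k^q C^{-q}$ that matches the $k^{q+1}/\theta$ scale up to $\Lambda$.
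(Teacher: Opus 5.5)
There is a genuine gap in how you handle the far tail. Your levels, cutoffs, the bounds $\gamma^{-1}k^{q-1}(u^-_{k_j})^2\le g(u^-_{k_j})\le\gamma k^{q+1}$, the embedding step and the form of $\nu$ through $k^{q+1-p}\rho^{sp}/\theta$ are all fine, but with a constant level the iteration does not close. Set
\[
T(\tau):=\int_{\mathbb{R}^N\setminus B_R(x_0)}\frac{[u_-(y,\tau)]^{p-1}}{|y-x_0|^{N+sp}}\,dy,\qquad A_j(\tau):=B_{\rho_j}(x_0)\cap\{u(\cdot,\tau)<k_j\}.
\]
The contribution of $\mathbb{R}^N\setminus B_R$ to the last term of \eqref{eqenergy} is bounded by $\gamma k\int|A_j(\tau)|\,T(\tau)\,d\tau$. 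The negation of \eqref{eqDeGiorgi3} controls only $\int T\,d\tau\le(k/C)^q$, not $\sup_\tau T(\tau)$.

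Integrating the tail in time therefore forces you to replace $|A_j(\tau)|$ by $\sup_\tau|A_j(\tau)|$ (or by $|B_\rho|$). This gives $k^{q+1}C^{-q}\sup_\tau|A_j(\tau)|$, which is not a multiple of the space--time measure $|A_j|=|Q_j\cap\{u<k_j\}|$. Since $|A_j|\le\theta\sup_\tau|A_j(\tau)|$, your comparison with $\frac{k^{q+1}}{\theta}|A_j|$ goes the wrong way, and matching homogeneity in $k$ does not help. What you actually get is $Y_{j+1}\le\gamma b^j\Lambda\,(Y_j+\epsilon_0)^{1+sp/N}$ with a fixed $\epsilon_0\sim C^{-q}$. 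Because of the factor $b^j$, this does not force $Y_j\to0$ for any fixed $C$; the constants may depend on $\rho,\theta,k$, but not on $j$.

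The missing idea is a time-dependent truncation level. Lemma \ref{lemEnergy} is formulated for absolutely continuous $k(\tau)$ to allow this, and the paper uses the same device for sub-solutions in Section \ref{sec:Upper}. Take, for instance, $k_j=\frac k2+\frac{k}{2^{j+1}}$ and
\[
k_j(t):=k_j-\tilde C\,k^{1-q}\int_{t_0-\theta}^{t}T(\tau)\,d\tau .
\]
For the lower truncation, the term $+\iint k_j'(\tau)\,h(u^-_{k_j(\tau)})\,\zeta^p$ in \eqref{eqenergy} is then nonpositive. On $\{u<k_j(\tau)\}$ one has $h(u^-_{k_j(\tau)})\ge\gamma^{-1}k^{q-1}u^-_{k_j(\tau)}$, because $u\ge0$ there and $k_j(\tau)\asymp k$. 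Hence this term absorbs the far-tail contribution $\gamma\iint u^-_{k_j(\tau)}\zeta_j^p\,T(\tau)$ once $\tilde C$ is large. The required $\tilde C$ is independent of $j$, since $|x-y|\ge\frac12|y-x_0|$ for $x\in B_\rho(x_0)$, $y\notin B_R(x_0)$, $\rho<\frac12R$. The part of the tail from $B_R\setminus B_{\rho_j}$ stays proportional to $|A_j|$, because $u^-_{k_j(\tau)}\le k$ there.

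The negation of \eqref{eqDeGiorgi3} is then used only to make the total drift $\tilde Ck^{1-q}\int T\,d\tau\le\tilde CC^{-q}k\le\frac14k$. This keeps $\{u<k_0(t)\}\subset\{u\le k\}$, so $Y_0\le\nu$ still follows from \eqref{eqDeGiorgi2}. The gaps $k_j(t)-k_{j+1}(t)$ do not depend on $t$, and the limiting level is at least $\frac12k-\frac14k=\frac14k$, which accounts for the $\frac14k$ in \eqref{eqDeGiorgi4}. With this modification the rest of your argument goes through.
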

The number $\nu \in (0, 1)$ can be  chosen to satisfy
$$\nu:=\frac{1}{\gamma}k^{q+1-p}\frac{\rho^{sp}}{\theta}\bigg[1+k^{q+1-p}\frac{\rho^{sp}}{\theta}\bigg]^{-1-\frac{N}{sp}}.$$

\section{Expansion of Positivity}\label{sec:Exp}

\subsection{Propagation of Positivity in Measure}

The following lemma propagates measure theoretical information forward in time, for the proof we refer the reader to \cite[Lemma 3.1]{SavSkrYev2026arxiv}.

\begin{lemma}\label{lemPropagation}
Let $u$ be a  local weak super-solution to \eqref{eq1.1}- \eqref{eq1.3}, $u \geqslant 0$ in $\Omega_T$, $q>0$, $p>1$. 
Suppose also  that
\begin{equation}\label{eqProp2}
\big|B_r(x_0)\cap\big\{u(\cdot, t_0)\geqslant k \big\}\big|\geqslant \alpha |B_r(x_0)|,
\end{equation}
with some $k > 0$ and some $\alpha \in (0, 1)$. Then there exist constants $\tilde{\epsilon}$, $\tilde{\delta} \in (0, 1)$ depending
only on the data and $\alpha$ such that
either
\begin{equation}\label{eqProp3}
\,\operatorname{Tail}(u_{-}, \mathcal{Q}) > \tilde{\delta}^{\frac{1}{q}}\,k,
\end{equation}
or
\begin{equation}\label{eqProp4}
\big|B_r(x_0)\cap\big\{u(\cdot, t))\geqslant \tilde{\epsilon}\,k\big\}\big|\geqslant \frac{1}{4}\alpha |B_r(x_0)|,
\quad\text{for all}\quad  t\in(t_0, t_0+ \tilde{\delta}\,(\tilde \epsilon k)^{q-p+1}\,r^{sp}), 
\end{equation}
provided that  $B_{4r}(x_0)\times (t_0, t_0+ (\tilde \epsilon k)^{q-p+1}\,r^{sp})\subset \mathcal{Q}$.

\end{lemma}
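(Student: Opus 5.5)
The statement to prove is Lemma~\ref{lemPropagation}. The plan is to apply the energy estimate of Lemma~\ref{lemEnergy} to $(u-k)_-$ with constant level $k(\tau)\equiv k$, so that the term involving $k'$ drops out. The cutoff will be time-independent, $\zeta=\zeta(x)$, with $\zeta\equiv1$ on $B_{(1-\sigma)r}(x_0)$, support in $B_r(x_0)$, and $|\nabla\zeta|\leqslant \gamma/(\sigma r)$. Then the $|\zeta_\tau|$ term also vanishes. We work on the time interval $(t_0,t)$ with $t<t_0+\tilde\delta(\tilde\epsilon k)^{q-p+1}r^{sp}$; by time translation we integrate from $t_0$ forward. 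Dropping the nonnegative fractional-gradient and "good" nonlocal terms on the left, this gives, for all such $t$,
\begin{equation*}
\int_{B_{(1-\sigma)r}} g(u^-_k(\cdot,t))\,dx\leqslant \int_{B_r} g(u^-_k(\cdot,t_0))\,dx+\gamma\frac{k^p}{\sigma^{p}r^{sp}}(t-t_0)|B_r|+\gamma \frac{k}{\sigma^{N+sp}r^{sp}}\,(t-t_0)|B_r|\,k^{p-1}\Big(1+\frac{r^{sp}}{t-t_0}\,\frac{\operatorname{Tail}(u_-,\mathcal Q)^q}{k^{p-1}}\Big).
\end{equation*}
Here the tail has been estimated using $|x-y|\geqslant \sigma|y-x_0|$ for $x\in\operatorname{supp}\zeta$ and $y\notin B_r$, together with $u_k^-\leqslant k+u_-$. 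The time-integrated tail then appears directly as $\operatorname{Tail}^q$ times $k\,\sigma^{-N-sp}|B_r|$.

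Next we compare $g$ with powers. Since $g(u^-_k)=\int_0^{(u-k)_-}(k-z)^{q-1}z\,dz$ and $u\geqslant0$, we have $g(u^-_k)\leqslant \gamma k^{q+1}$ everywhere. On the set $\{u\geqslant k\}$ we have $g=0$, so the first right-hand term is at most $\gamma k^{q+1}(1-\alpha)|B_r|$. On $\{u<\tilde\epsilon k\}$ the lower bound $g(u^-_k)\geqslant \int_0^{(1-\tilde\epsilon)k}(k-z)^{q-1}z\,dz=: k^{q+1}G(\tilde\epsilon)$ holds, with $G(\tilde\epsilon)\to G(0)=\int_0^1(1-z)^{q-1}z\,dz$ as $\tilde\epsilon\to0$. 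Using $G(0)$ as the sharp bound for $g$ instead of $\gamma$ avoids losing constants in the comparison.

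Dividing by $k^{q+1}G(\tilde\epsilon)$ then yields
\begin{equation*}
|B_{(1-\sigma)r}\cap\{u(\cdot,t)<\tilde\epsilon k\}|\leqslant \frac{G(0)}{G(\tilde\epsilon)}(1-\alpha)|B_r|+\gamma\sigma^{-N-p}\,\frac{(t-t_0)k^{p-1-q}}{r^{sp}}|B_r|\,G(\tilde\epsilon)^{-1}+\gamma\sigma^{-N-sp}\frac{\operatorname{Tail}^q}{k^q}G(\tilde\epsilon)^{-1}|B_r|.
\end{equation*}
The parameters are then fixed in order. First take $\sigma$ so that $|B_r\setminus B_{(1-\sigma)r}|\leqslant \tfrac{\alpha}{8}|B_r|$. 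Next take $\tilde\epsilon$ small so that $\frac{G(0)}{G(\tilde\epsilon)}(1-\alpha)\leqslant 1-\tfrac{3\alpha}{4}$, which is possible by continuity. Finally, since $t-t_0\leqslant\tilde\delta(\tilde\epsilon k)^{q-p+1}r^{sp}$, the second error term is at most $\gamma\sigma^{-N-p}\tilde\delta\,\tilde\epsilon^{q-p+1}$. This is where the regime matters: when $q-p+1>0$ the factor $\tilde\epsilon^{q-p+1}$ is harmless, so we choose $\tilde\delta$ small enough to make the term at most $\alpha/16$. If alternative \eqref{eqProp3} fails, the tail term is at most $\gamma\sigma^{-N-sp}\tilde\delta/G(\tilde\epsilon)$, also at most $\alpha/16$. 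Summing gives $|B_r\cap\{u<\tilde\epsilon k\}|\leqslant(1-\alpha/4)|B_r|$, which is \eqref{eqProp4}.

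The main obstacle is the degenerate weight $(k-z)^{q-1}$ when $q<1$ or $q>1$. For $q<1$ the weight blows up near $z=k$, and for $q>1$ it vanishes there, so the bounds on $g$ must be handled with the exact function $G$ rather than crude powers. Only the continuity of $G$ at $0$ is needed for this. A second delicate point is that the tail in Lemma~\ref{lemEnergy} is taken over $\mathbb R^N\setminus B_r$ with $u_-$ supported outside $\Omega$-type regions. This is handled by noting that $u\geqslant0$ in $\mathcal Q$, so $u_k^-\leqslant k$ inside $B_{4r}$ and only $u_-$ contributes outside.
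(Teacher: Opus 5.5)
Your proof is correct and follows the standard argument the paper relies on; it cites \cite[Lemma 3.1]{SavSkrYev2026arxiv} instead of giving a proof, and the cited route is the energy estimate for $(u-k)_-$ with constant level and time-independent cutoff, followed by comparing $g$ at times $t_0$ and $t$ through $g=k^{q+1}G(\cdot)$ with $G(\tilde\epsilon)\to G(0)$, which reduces to the familiar factor $(1-\tilde\epsilon)^2$ when $q=1$. Two small adjustments: take $\operatorname{supp}\zeta\subset B_{(1-\sigma/2)r}(x_0)$ so that $|x-y|\geqslant\frac{\sigma}{2}|y-x_0|$ genuinely holds for $y\notin B_r(x_0)$, and note that the sign of $q-p+1$ plays no role, because $\tilde\delta$ is chosen after $\tilde\epsilon$, so the lemma holds for all $q>0$, $p>1$ as stated.
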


\subsection{Two Versions of the Measure Shrinking Lemma}

The following measure shrinking lemmas can  be readily derived.  The argument runs  like in \cite[Lemma 3.4]{Lia2024a} and \cite[Lemma 3.6, Lemma 3.7]{Lia2026arxiv}.

\begin{lemma}\label{lemShrinking}
Let $u$ be a  local weak super-solution to \eqref{eq1.1}-\eqref{eq1.3}, $u \geqslant 0$ in $\Omega_T$, $q>0$, $p>1$, and let
 also  with some  $\delta \in \big(0, 1\big)$, $\epsilon \in (0, 1/2)$ and $k >0$ 
there holds
\begin{equation}\label{eqShr2}
\big|B_{\frac{3}{4} r}(x_0)\cap\big\{u(\cdot, t))\geqslant k\big\}\big|\geqslant \alpha |B_{\frac{3}{4} r}(x_0)|,
\end{equation}
for all $t\in (t_0-\delta (\epsilon k)^{q-p+1} r^{sp}, t_0)$. Then, either
\begin{equation}\label{eqShr3}
\operatorname{Tail}(u_{-}, \mathcal{Q}) > \epsilon\,k,
\end{equation}
or 
\begin{equation}\label{eqShr4}
\big|Q^-_{\frac{3}{4} r, \delta \eta}(x_0, t_0)\cap\big\{u \leqslant \frac{1}{2}\epsilon k\big\}\big|\leqslant \frac{\gamma\,\epsilon^{p-1}}{\delta\,\alpha}|Q^-_{\frac{3}{4} r, \delta \eta}(x_0, t_0)|,\quad \eta=(\epsilon k)^{q-p+1}r^{sp},
\end{equation}
provided that $Q^-_{4 r, \eta}(x_0, t_0)\subset \mathcal{Q}$.
\end{lemma}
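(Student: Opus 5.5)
The argument follows the De Giorgi scheme of Lia2024a, Lemma 3.4 (and Lia2026arxiv, Lemmas 3.6–3.7). Set $\eta=(\epsilon k)^{q-p+1}r^{sp}$, $k_j:=\epsilon k/2^j$ for $j=0,1,\dots,j_*$, with $j_*$ chosen so that $2^{-j_*}\le\epsilon$. The truncations $(u-k_j)_-$ are estimated on $Q^-_{r,\delta\eta}(x_0,t_0)$.

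First I apply the energy estimate of Lemma \ref{lemEnergy} to $u^-_{k_j}$, with constant level $k(\tau)\equiv k_j$ so that the $k'$ term vanishes. The cutoff $\zeta(x)$ is time independent, equals $1$ on $B_{3r/4}$, is supported in $B_r$, and satisfies $|\nabla\zeta|\le\gamma/r$.

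Only two terms are kept on the left. The first is the fractional energy. The second is the "good" nonlocal term
$$\int\!\!\int u^-_{k_j}(x)\zeta^p\int_{B_r}\frac{[u^+_{k_j}(y)]^{p-1}}{|x-y|^{N+sp}}dy\,dx\,d\tau .$$

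On the right the terms are bounded as follows.
\begin{itemize}
\item The time term at $t_0-\delta\eta$ is $g(u^-_{k_j})\le\gamma k_j^{q+1}$, integrated over $B_r$. This gives $\gamma k_j^{q+1}r^N$, which we write as $\gamma k_j^{p}\,\frac{k_j^{q+1-p}r^{sp}}{\delta\eta}\,\frac{\delta\eta r^N}{r^{sp}}$.
\item Since $k_j\le\epsilon k$, we have $k_j^{q+1-p}r^{sp}\le\eta$ (using $q+1-p>0$). Hence this time term is at most $\gamma\delta^{-1}k_j^p|Q^-_{r,\delta\eta}|/r^{sp}$.
\item The cutoff term is at most $\gamma k_j^p r^{-sp}|Q^-_{r,\delta\eta}|$.
\item The tail term is at most $\gamma k_j\,r^{-sp}\,|Q|\,[k_j^{p-1}+\mathrm{Tail}(u_-)^{q}/\ldots]$. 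The alternative ``$\mathrm{Tail}(u_-,\mathcal Q)\le\epsilon k$'' absorbs it into $\gamma k_j^p|Q^-_{r,\delta\eta}|/r^{sp}$ once $\mathrm{Tail}$ is normalized by the time length $\eta$. I expect the tail to need exactly this scaling with $\eta$ built in.
\end{itemize}

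Next I use the hypothesis \eqref{eqShr2} to bound the good term from below. For each $t$, on $\{u(\cdot,t)\le k_{j+1}\}\cap B_{3r/4}$ we have $u^-_{k_j}\ge k_j/2$. On the set $\{u\ge k\}\cap B_{3r/4}$, which has measure $\ge\alpha|B_{3r/4}|$, we have $u^+_{k_j}\ge k/2$ because $\epsilon<1/2$.

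So the good term is $\ge\gamma^{-1}k_j\,k^{p-1}\,\alpha\,r^{-sp}\,|Q^-_{3r/4,\delta\eta}\cap\{u\le k_{j+1}\}|$. Comparing with the right side gives
$$|A_{j+1}|\le\gamma\frac{k_j^{p-1}}{\delta\,\alpha\,k^{p-1}}|Q^-_{3r/4,\delta\eta}|\le\frac{\gamma\,\epsilon^{p-1}}{\delta\alpha}|Q^-_{3r/4,\delta\eta}|,$$
where $A_{j+1}=Q^-_{3r/4,\delta\eta}\cap\{u\le k_{j+1}\}$.

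Taking $j=0$ (level $\frac12\epsilon k$) already yields \eqref{eqShr4}. So no iteration over $j$ and no De Giorgi isoperimetric argument is needed; this is the "good term" version of measure shrinking.

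The main obstacle is the time term. Its size $k_j^{q+1}r^N$ must be compared with $\delta\eta$, and this works because $q+1-p>0$ and $k_j\le\epsilon k$, which gives the factor $1/\delta$. One must also check that the supremum term at $t_0$ has the right sign and can be discarded. The tail alternative must be set up so that it contributes no more than $\epsilon^{p-1}$ times the good term.
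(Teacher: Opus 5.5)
Your argument is correct and is essentially the paper's proof. Both use a single energy estimate for $(u-\epsilon k)_-$ with constant level and time-independent cutoff, keep the good nonlocal term, bound the initial-time, cutoff and tail terms by $\gamma\delta^{-1}(\epsilon k)^p r^{-sp}|Q^-_{r,\delta\eta}|$, and bound the good term from below using \eqref{eqShr2}. The $k_j$ iteration and the assumption $q+1>p$ are unnecessary, since at level $\epsilon k$ one has exactly $(\epsilon k)^{q+1}|B_r|=\delta^{-1}(\epsilon k)^p r^{-sp}|Q^-_{r,\delta\eta}|$.

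The tail step you left open closes directly, because the paper's $\operatorname{Tail}$ is time-integrated with exponent $1/q$. Taking $\zeta$ supported in $B_{7r/8}$, the far part is at most $\gamma\,\epsilon k\,|B_r|\,[\operatorname{Tail}(u_-,\mathcal{Q})]^q\le\gamma(\epsilon k)^{q+1}|B_r|$, the same size as the initial-time term. So it carries the factor $\delta^{-1}$ rather than being absorbed into the $\delta$-free term as you wrote, which is harmless for the conclusion.
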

\begin{proof}
Use  energy estimate \eqref{eqenergy} for the function $u^-_{k_\epsilon}$ with $k_{\epsilon}=\epsilon k$. The cutoff function $\zeta(x)$ is such that $\zeta(x)\in C^1_0(B_{\frac{7}{8}r}(x_0))$, $\zeta(x)=1$ in $B_{\frac{3}{4}r}(x_0)$, $0\leqslant \zeta(x)\leqslant 1$ and $|\nabla \zeta(x)|\leqslant \frac{8}{r}$. We obtain 
\begin{equation}\label{eqS1}
\begin{aligned}
\int\limits^{t_0}_{t_0-\delta \eta}\,\,\int\limits_{B_{\frac{3}{4} r}(x_0)}&u^-_{k_\epsilon}(x, \tau)\,\bigg(\int\limits_{B_{\frac{3}{4}r}(x_0)}\frac{\big[u^+_{k_\epsilon}(y, \tau)\big]^{p-1}}{|x-y|^{N+sp}}dy\bigg)\,dx\,d\tau\\&\leqslant \gamma \int\limits_{B_{r}(x_0)\times\{t_0-\delta \eta\}}\,\,\int\limits_u^{k_\epsilon}\,z^{q-1}(k_\epsilon-z) dz  dx\\&+
\gamma\int\limits^{t_0}_{t_0-\delta \eta}\,\,\int\limits_{B_r(x_0)}\int\limits_{B_r(x_0)}\max[u^-_{k_\epsilon}(x, \tau), u^-_{k_\epsilon}(y, \tau)]^p\frac{|\zeta(x)-\zeta(y)|^p}{|x-y|^{N+sp}}\,dxdyd\tau\\&+
\int\limits^{t_0}_{t_0-\delta \eta}\,\,\int\limits_{B_{\frac{7}{8}r}(x_0)}u^-_{k_\epsilon}(x, \tau)\bigg(\sup\limits_{x\in B_{\frac{7}{8} r}(x_0)}\int\limits_{\mathbb{R}^N\setminus B_r(x_0)}\frac{\big[ u^-_{k_\epsilon}(y, \tau)\big]^{p-1}}{|x-y|^{N+sp}} dy\bigg) dx d\tau.
\end{aligned}
\end{equation}
We estimate the integrals on the right-hand side of \eqref{eqS1} as follows
$$
\int\limits_{B_{r}(x_0)\times\{t_0-\delta \eta\}}\,\,\int\limits_u^{k_\epsilon}\,z^{q-1}(k_\epsilon-z) dz  dx\leqslant \gamma (\epsilon k)^{q+1}|B_r(x_0)|=\frac{\gamma (\epsilon k)^p}{\delta r^{sp}} |Q^-_{r, \delta \eta}(x_0, t_0)|.
$$
Using the Lipschitz continuity of the  cutoff function,
$$|\zeta(x)-\zeta(y)|\leqslant \gamma \frac{|x-y|}{r},$$
we obtain
\begin{equation*}
\int\limits^{t_0}_{t_0-\delta \eta}\,\,\int\limits_{B_r(x_0)}\int\limits_{B_r(x_0)}\max[u^-_{k_\epsilon}(x, \tau), u^-_{k_\epsilon}(y, \tau)]^p\frac{|\zeta(x)-\zeta(y)|^p}{|x-y|^{N+sp}}\,dxdyd\tau\leqslant \frac{\gamma (\epsilon k)^p}{r^{sp}} |Q^-_{r, \delta \eta}(x_0, t_0)|.
\end{equation*}
Assuming that \eqref{eqShr3} is violated, we estimate the last term on the right-hand side of \eqref{eqS1} as follows
\begin{equation*}
    \begin{aligned}
\int\limits^{t_0}_{t_0-\delta \eta}\,\,\int\limits_{B_{\frac{7}{8}r}(x_0)}&u^-_{k_\epsilon}(x, \tau)\bigg(\sup\limits_{x\in B_{\frac{7}{8} r}(x_0)}\,\,\int\limits_{\mathbb{R}^N\setminus B_r(x_0)}\frac{\big[ u^-_{k_\epsilon}(y, \tau)\big]^{p-1}}{|x-y|^{N+sp}} dy\bigg) dx d\tau\\
&\leqslant \int\limits^{t_0}_{t_0-\delta \eta}\,\,\int\limits_{B_{\frac{7}{8}r}(x_0)}u^-_{k_\epsilon}(x, \tau)\bigg(\sup\limits_{x\in B_{\frac{7}{8} r}(x_0)}\,\,\int\limits_{B_R(x_0)\setminus B_r(x_0)}\frac{\big[ u^-_{k_\epsilon}(y, \tau)\big]^{p-1}}{|x-y|^{N+sp}} dy\bigg) dx d\tau\\
&+\int\limits^{t_0}_{t_0-\delta \eta}\,\,\int\limits_{B_{\frac{7}{8}r}(x_0)}u^-_{k_\epsilon}(x, \tau)\bigg(\sup\limits_{x\in B_{\frac{7}{8} r}(x_0)}\,\,\int\limits_{\mathbb{R}^N\setminus B_R(x_0)}\frac{\big[ u^-_{k_\epsilon}(y, \tau)\big]^{p-1}}{|x-y|^{N+sp}} dy\bigg) dx d\tau\\
&\leqslant \frac{\gamma(\epsilon k)^p}{r^{sp}}|Q^-_{r, \delta \eta}(x_0, t_0)|+\gamma \epsilon k\big[\operatorname{Tail}(u_-, \mathcal{Q})\big]^q|B_r(x_0)|\leqslant \frac{\gamma (\epsilon k)^p}{\delta r^{sp}}|Q^-_{r, \delta \eta}(x_0, t_0)|.
\end{aligned}
\end{equation*}
Thus, the last three inequalities and \eqref{eqS1} yield
$$
\int\limits^{t_0}_{t_0-\delta \eta}\int\limits_{B_{\frac{3}{4} r}(x_0)}u^-_{k_\epsilon}(x, \tau)\,\bigg(\int\limits_{B_{\frac{3}{4}r}(x_0)}\frac{\big[u^+_{k_\epsilon}(y, \tau)\big]^{p-1}}{|x-y|^{N+sp}}dy\bigg)\,dx\,d\tau\leqslant 
\frac{\gamma (\epsilon k)^p}{\delta r^{sp}}|Q^-_{r, \delta \eta}(x_0, t_0)|,
$$
provided that \eqref{eqShr3} is violated.
The left-hand side of this inequality we estimate  using \eqref{eqShr2}
\begin{equation*}
\alpha\,\epsilon k^p\big|Q^-_{\frac{3}{4} r, \delta \eta}(x_0, t_0)\cap\big\{u\leqslant \frac{1}{2} \epsilon\,k\big\}\big|\leqslant \frac{\gamma}{\delta}(\epsilon k)^p |Q^-_{\frac{3}{4} r, \delta\eta}(x_0, t_0)|,
\end{equation*}
which yields
$$
\big|Q^-_{\frac{3}{4} r, \delta \eta}(x_0, t_0)\cap\big\{u\leqslant \frac{1}{2} \epsilon\,k\big\}\big|\leqslant \frac{\gamma \epsilon^{p-1}}{\delta \alpha} |Q^-_{\frac{3}{4} r, \delta\eta}(x_0, t_0)|.
$$
This completes the proof of the lemma.
\end{proof}

\begin{lemma}\label{lemShrinking1}
Let $u$ be a  local weak super-solution to \eqref{eq1.1}- \eqref{eq1.3}, $u \geqslant 0$ in $\Omega_T$, $q>p-1$, $p>1$, and fix  $k > 0$ and $\delta \in \big(0, 1\big)$. 
 For every $\epsilon \in (0, \frac{1}{2})$ there exists $\gamma >0$ depending only on the data, such that either
\begin{equation}\label{eqShr6}
\operatorname{Tail}(u_{-}, \mathcal{Q}) > \epsilon\,k,
\end{equation}
or there exists a time level $\tilde{t}\in (t_0-\delta k^{q-p+1} r^{sp}, t_0)$ such that
\begin{equation}\label{eqShr7}
\big|B_{\frac{3}{4} r}(x_0)\cap\big\{u(\cdot, \tilde{t})\leqslant \frac{1}{2} \epsilon k\big\}\big|\leqslant \frac{\gamma \big[\epsilon^q+\epsilon^{p-1}\delta\big] k^q}{\int\limits^{t_0}_{t_0-\delta \eta}\,\,\int\limits_{\mathbb{R}^N}\frac{[u_+(y, \tau)]^{p-1}}{(r+|x_0-y|)^{N+sp}}dy d\tau}|B_{\frac{3}{4} r}(x_0)|,
\end{equation}
provided that $Q^-_{4 r, \eta}(x_0, t_0)\subset \mathcal{Q}$, $\eta:=k^{q-p+1} r^{sp}$.
\end{lemma}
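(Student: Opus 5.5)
We follow the proof of Lemma \ref{lemShrinking}, but with the nonlocal "good term" on the left-hand side of \eqref{eqenergy} now bounded from below by the weighted integral of $u_+^{p-1}$ over all of $\mathbb{R}^N$, instead of by a measure density. We then pick a time level by averaging.

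\textbf{Energy estimate.} Throughout, set $k_\epsilon=\epsilon k$ and $\eta=k^{q-p+1}r^{sp}$, and let $\zeta(x)$ be the cutoff from the proof of Lemma \ref{lemShrinking}. Apply \eqref{eqenergy} to $u^-_{k_\epsilon}$ on $B_r(x_0)\times(t_0-\delta\eta,t_0)$. Assume \eqref{eqShr6} fails, so $[\operatorname{Tail}(u_-,\mathcal Q)]^q\le(\epsilon k)^q$. We bound the right-hand side term by term.
\begin{itemize}
\item The initial term is at most $\gamma(\epsilon k)^{q+1}|B_r|$, exactly as before.
\item The cutoff term is at most $\gamma(\epsilon k)^p r^{-sp}\,\delta\eta\,|B_r|=\gamma\delta\,\epsilon^p k^{q+1}|B_r|$, using $\eta r^{-sp}=k^{q+1-p}$.
\item The tail term splits as in Lemma \ref{lemShrinking}. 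The part over $B_R\setminus B_r$ contributes $\gamma\delta\epsilon^pk^{q+1}|B_r|$. The part over $\mathbb{R}^N\setminus B_R$ contributes $\gamma\,\epsilon k\,[\operatorname{Tail}(u_-,\mathcal Q)]^q|B_r|\le\gamma(\epsilon k)^{q+1}|B_r|$.
\end{itemize}
Altogether, the right-hand side is at most $\gamma[\epsilon^{q}+\epsilon^{p-1}\delta]\,\epsilon k^{q+1}|B_r|$.

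\textbf{Lower bound for the good term.} The third term on the left of \eqref{eqenergy} is restricted to $y\in B_r$. On the set $\{u(\cdot,\tau)\le\frac12\epsilon k\}\cap B_{3r/4}$ we have $u^-_{k_\epsilon}\ge\frac12\epsilon k$. For $y\in B_r$ with $u(y)\ge 2\epsilon k$ we have $u^+_{k_\epsilon}(y)\ge\frac12 u(y)$, and for $x\in B_{3r/4}$, $|x-y|\le\gamma(r+|x_0-y|)$. Hence
\[
\int_{B_r}\frac{[u^+_{k_\epsilon}(y)]^{p-1}}{|x-y|^{N+sp}}dy\ \ge\ \frac1\gamma\int_{B_r}\frac{u_+^{p-1}(y)}{(r+|x_0-y|)^{N+sp}}dy-\gamma(\epsilon k)^{p-1}r^{-sp}.
\]
The contribution of $y\notin B_r$ is missing from this bound and has to be recovered. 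The natural way is to take the tail of $u^+_{k_\epsilon}$ into the good term: in the energy estimate for $u^-$, the full nonlocal interaction $\int_{\mathbb{R}^N}(u^+_{k_\epsilon}(y))^{p-1}|x-y|^{-N-sp}dy$ appears with a favourable sign before one discards it, so one keeps it, with the exterior part of $u_-$ on the right as above. This gives, for a.e. $\tau$ and $x\in B_{3r/4}\cap\{u(\cdot,\tau)\le\frac12\epsilon k\}$, the pointwise lower bound $\frac1\gamma F(\tau)-\gamma(\epsilon k)^{p-1}r^{-sp}$, where
\[
F(\tau):=\int_{\mathbb{R}^N}\frac{u_+^{p-1}(y,\tau)}{(r+|x_0-y|)^{N+sp}}dy .
\]

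\textbf{Averaging in time.} Combining the two steps, and absorbing the error term $\gamma(\epsilon k)^{p-1}r^{-sp}\cdot\epsilon k\cdot\delta\eta|B_r|$ into the right-hand side,
\[
\epsilon k\int_{t_0-\delta\eta}^{t_0}F(\tau)\,\big|B_{3r/4}\cap\{u(\cdot,\tau)\le\tfrac12\epsilon k\}\big|\,d\tau\le\gamma[\epsilon^q+\epsilon^{p-1}\delta]\,\epsilon k^{q+1}|B_r| .
\]
Let $m(\tau)$ denote the measure fraction above. If $m(\tau)>A$ for a.e. $\tau$, then the left side exceeds $A\,\epsilon k\int F\,d\tau$. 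Choosing
\[
A=\gamma[\epsilon^q+\epsilon^{p-1}\delta]\,k^q\Big/\int_{t_0-\delta\eta}^{t_0}F\,d\tau
\]
gives a contradiction. Hence some $\tilde t$ in the interval has $m(\tilde t)\le A$, which is \eqref{eqShr7}.

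\textbf{Main obstacle.} The delicate step is the second one. One must keep the exterior part of the positive interaction on the good side of the energy estimate, which means revisiting the derivation of Lemma \ref{lemEnergy} rather than using \eqref{eqenergy} as stated. One must also check that the error constants scale as $\epsilon^{p-1}\delta k^{q}$ after normalizing by $\epsilon k$.
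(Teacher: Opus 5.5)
Your proposal is correct and follows the paper's proof. Like you, the paper applies the energy estimate to $(u-\epsilon k)_-$ with the positive nonlocal interaction kept over all of $\mathbb{R}^N$; it uses, without comment, exactly the slightly stronger form of Lemma \ref{lemEnergy} that you flag as the main obstacle. It then bounds the right-hand side as in Lemma \ref{lemShrinking} by $\gamma[\epsilon^{q+1}+\epsilon^p\delta]k^{q+1}|B_r|$, and bounds the good term from below by $\epsilon k$ times the sublevel-set measure times the weighted $u_+^{p-1}$ integral, minus an error that is absorbed into the right-hand side. The only difference is cosmetic: the paper pulls out $\inf_t$ of the sublevel-set measure before integrating in time, whereas you keep the time-weighted integral and extract $\tilde t$ by an averaging/contradiction argument, which is equivalent.
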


\begin{proof}
We use \eqref{eqenergy} with $k_\epsilon =\epsilon k$ and the cutoff function is chosen as in the previous lemma. Estimating the integrals on the right-hand side as in Lemma \ref{lemShrinking}, we obtain
\begin{equation*}
\begin{aligned}
\int\limits^{t_0}_{t_0-\delta \eta}\,\,&\int\limits_{B_{\frac{3}{4} r}(x_0)}u^-_{k_\epsilon}(x, \tau)\,\bigg(\int\limits_{\mathbb{R}^N}\frac{\big[u^+_{k_\epsilon}(y, \tau)\big]^{p-1}}{|x-y|^{N+sp}}dy\bigg)\,dx\,d\tau \\&\leqslant \gamma \int\limits_{B_{r}(x_0)\times\{t_0-\delta \eta\}}\,\,\int\limits_u^{k_\epsilon}\,z^{q-1}(k_\epsilon-z) dz  dx  \\&+
\gamma\int\limits^{t_0}_{t_0-\delta \eta}\,\,\int\limits_{B_r(x_0)}\int\limits_{B_r(x_0)}\max[u^-_{k_\epsilon}(x, \tau), u^-_{k_\epsilon}(y, \tau)]^p\frac{|\zeta(x, \tau)-\zeta(y, \tau)|^p}{|x-y|^{N+sp}}\,dxdyd\tau \\&+
\int\limits^{t_0}_{t_0-\delta \eta}\,\,\int\limits_{B_{\frac{7}{8}r}(x_0)}u^-_{k_\epsilon}(x, \tau)\bigg(\sup\limits_{x\in B_{\frac{7}{8} r}(x_0)}\int\limits_{\mathbb{R}^N\setminus B_r(x_0)}\frac{\big[ u^-_{k_\epsilon}(y, \tau)\big]^{p-1}}{|x-y|^{N+sp}} dy\bigg) dx d\tau\leqslant
 \gamma (\epsilon k)^{q+1}|B_r(x_0)|\\&+ \frac{\gamma (\epsilon k)^p}{r^{sp}} |Q^-_{r, \delta \eta}(x_0, t_0)|\leqslant \gamma \Big[\epsilon^{q+1}+\epsilon^p\,\delta\Big] k^{q+1}|B_r(x_0)|,
\end{aligned}
\end{equation*}
provided that \eqref{eqShr6} is violated.  Next, we have
\begin{multline*}
\int\limits^{t_0}_{t_0-\delta \eta}\,\,\int\limits_{B_{\frac{3}{4} r}(x_0)}u^-_{k_\epsilon}(x, \tau)\,\bigg(\int\limits_{\mathbb{R}^N}\frac{\big[u^+_{k_\epsilon}(y, \tau)\big]^{p-1}}{|x-y|^{N+sp}}dy\bigg)\,dx\,d\tau \\\geqslant
\gamma \epsilon k \,\inf\limits_{t\in (t_0-\delta \eta, t_0)} \big|B_{\frac{3}{4} r}(x_0)\cap\big\{u(\cdot, t) \leqslant \frac{1}{2}\epsilon k\big\}\big|\int\limits^{t_0}_{t_0-\delta \eta}\,\,\int\limits_{\mathbb{R}^N}\frac{\big[u^+_{k_\epsilon}(y, \tau)\big]^{p-1}}{\big(r+|x_0-y|\big)^{N+sp}}dy d\tau.
\end{multline*}
By direct calculations
\begin{equation*}
\int\limits^{t_0}_{t_0-\delta \eta}\,\,\int\limits_{\mathbb{R}^N}\frac{\big[u^+_{k_\epsilon}(y, \tau)\big]^{p-1}}{\big(r+|x_0-y|\big)^{N+sp}}dy d\tau\geqslant  \gamma \int\limits^{t_0}_{t_0-\delta \eta}\,\,\int\limits_{\mathbb{R}^N}\frac{\big[u_+(y, \tau)\big]^{p-1}}{\big(r+|x_0-y|\big)^{N+sp}}dy d\tau -\gamma (\epsilon k)^q.
\end{equation*}
Combining the last three inequalities, we arrive at
\begin{equation}\label{eqShr8}
\inf\limits_{t\in (t_0-\delta \eta, t_0)} \big|B_{\frac{3}{4} r}(x_0)\cap\big\{u(\cdot, t)\leqslant\frac{1}{2}\epsilon k\big\}\big| \leqslant\frac{\gamma \big[\epsilon^q +\epsilon^{p-1}\delta\big] k^q}{\int\limits^{t_0}_{t_0-\delta \eta}\,\,\int\limits_{\mathbb{R}^N}\frac{[u_+(y, \tau)]^{p-1}}{(r+|x_0-y|)^{N+sp}}dy d\tau}|B_r(x_0)|,
\end{equation}
which completes the proof of the lemma.
\end{proof}

\subsection{Expansion of Positivity}

The following proposition will be used in the sequel. 

\begin{proposition}\label{pr.Expansion}
Let $u$ be a  local weak super-solution to \eqref{eq1.1}--\eqref{eq1.3} in $\Omega_T$, $u\geqslant 0$ in $\Omega_T$, and let $q> p-1$. Fix $k >0$, construct the cylinder $Q^-_{4r, k^{q-p+1}r^{sp}}(x_0, t_0)\subset \mathcal{Q}$ and assume that
\begin{equation}\label{eqE1}
|B_{\frac{3}{4} r}(x_0)\cap\big\{u(\cdot, t)\geqslant \xi\,k\big\}|\geqslant \alpha|B_{\frac{3}{4} r}(x_0)|,
\end{equation}
for all $t\in (t_0-\delta\,k^{q-p+1} r^{sp}, t_0)$, with some  $\delta$,
$\xi$, $\alpha \in (0, 1)$. Then there exist numbers $\epsilon_*$, $\xi_*\in (0, 1)$, depending only on the data,  $\alpha$ and $\delta$, such that either
\begin{equation}\label{eqE2}
\operatorname{Tail}(u_{-}, \mathcal{Q}) > \frac{1}{4}\,\epsilon_*\,k,
\end{equation}
or 
\begin{equation}\label{eqE3}
 u(x, t)\geqslant \frac{1}{4}\xi\,\xi_*\,k,\quad  (x, t)\in B_{\frac{r}{2}}(x_0)\times (t_0-\frac{1}{4}\,\delta\,k^{q-p+1} r^{sp}, t_0).
\end{equation}
\end{proposition}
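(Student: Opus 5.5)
The plan is to run the standard chain: measure shrinking, then a De Giorgi lemma, with the intrinsic scaling chosen so that the small density produced by shrinking matches the density threshold $\nu$ required by Lemma \ref{lemDeGiorgi}. Set $\bar k:=\xi k$. By \eqref{eqE1}, for all $t\in (t_0-\delta k^{q-p+1}r^{sp},t_0)$ the set $\{u(\cdot,t)\geqslant \bar k\}$ occupies a fraction $\alpha$ of $B_{\frac34 r}(x_0)$.

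\emph{Step 1 (shrinking).} For a parameter $\epsilon\in(0,1/2)$ to be fixed, apply Lemma \ref{lemShrinking} with level $\bar k$ and time scale $\eta=(\epsilon\bar k)^{q-p+1}r^{sp}$. Since $q>p-1$, we have $(\epsilon\xi)^{q-p+1}\leqslant 1$, so
\[
\delta\eta=\delta(\epsilon\xi)^{q-p+1}k^{q-p+1}r^{sp}\leqslant \delta k^{q-p+1}r^{sp},
\]
and \eqref{eqShr2} holds on the required interval. The inclusion $Q^-_{4r,\eta}\subset\mathcal Q$ follows from the assumed cylinder inclusion. The lemma then gives one of two alternatives. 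Either $\operatorname{Tail}(u_-,\mathcal Q)>\epsilon\xi k$, or
\[
\big|Q^-_{\frac34 r,\delta\eta}\cap\{u\leqslant \tfrac12\epsilon\xi k\}\big|\leqslant \frac{\gamma\epsilon^{p-1}}{\delta\alpha}\,|Q^-_{\frac34 r,\delta\eta}|.
\]
The trouble is that this only covers the short final window of length $\delta\eta$, not the full interval $(t_0-\frac14\delta k^{q-p+1}r^{sp},t_0)$. To fix this, apply the lemma with $t_0$ replaced by any $\bar t$ in that interval. Hypothesis \eqref{eqE1} holds up to every such $\bar t$, so we obtain the same density bound on every backward cylinder $B_{\frac34 r}\times(\bar t-\delta\eta,\bar t)$, and these cylinders cover the target window.

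\emph{Step 2 (De Giorgi).} On each such cylinder apply Lemma \ref{lemDeGiorgi} with level $\tilde k=\frac12\epsilon\xi k$, radius $\frac34 r$ and height $\theta=\delta\eta$. For this choice
\[
\tilde k^{q+1-p}\frac{(\frac34 r)^{sp}}{\delta\eta}\asymp \frac{1}{\delta},
\]
because $\eta$ was built at the level $\epsilon\xi k$. Hence $\nu\asymp \nu(\delta)$, which depends only on the data and $\delta$ and, crucially, not on $\epsilon$. This is the key point of the intrinsic scaling. Now choose $\epsilon$ so small that $\gamma\epsilon^{p-1}/(\delta\alpha)\leqslant\nu(\delta)$; this is possible because $p>1$. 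The lemma then yields either $C\operatorname{Tail}(u_-,\mathcal Q)\geqslant \tilde k$, or $u\geqslant \frac14\tilde k=\frac18\epsilon\xi k$ on the half cylinder $B_{\frac38 r}\times(\bar t-\frac12\delta\eta,\bar t)$.

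\emph{Step 3 (closing).} Taking the union over $\bar t$ covers the time interval $(t_0-\frac14\delta k^{q-p+1}r^{sp},t_0)$. To reach the full ball $B_{r/2}$ rather than $B_{\frac38 r}$, rerun the argument with slightly enlarged balls, or cover $B_{r/2}$ by finitely many smaller balls of radius comparable to $r$ on which \eqref{eqE1} persists with a reduced $\alpha$, since the ball ratios are fixed. Finally set $\xi_*=\frac12\epsilon$ and $\epsilon_*=\epsilon\xi/C$, absorbing constants; note $\epsilon_*$ may depend on $\xi$, or $\xi$ can be factored into the tail alternative.

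The main obstacle is Step 2: one must verify that $\nu$ is independent of $\epsilon$, which holds only because of the intrinsic choice of $\eta$, and that the tail alternatives from Steps 1 and 2 combine into the single condition \eqref{eqE2}. The choice of $\epsilon$ is what makes the two steps compatible.
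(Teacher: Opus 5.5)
Your proposal is correct and is essentially the paper's proof. Both apply the shrinking lemma on sliding backward cylinders $Q^-(x_0,\bar t)$ at the intrinsic time scale of the reduced level $\epsilon\xi k$, then apply Lemma~\ref{lemDeGiorgi} with a threshold $\nu$ depending only on the data and $\delta$, so that $\epsilon$ (the role played by $\xi_*$ in the paper) can be fixed to make the density bound at most $\nu$, and finally take a union over $\bar t$.

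Your bookkeeping is more careful than the paper's, which glosses over two points you caught: De Giorgi lands on $B_{\frac{3}{8}r}$, and the tail threshold naturally comes out proportional to $\xi k$. Of your two radius fixes, use the enlarged-ball one. Density information on $B_{\frac{3}{4}r}(x_0)$ does not transfer to smaller balls centered elsewhere.
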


\begin{proof}
Inside the cylinder $Q^-_{r,\delta k^{q-p+1}r^{sp}}(x_0,t_0)$, we consider a smaller cylinder $Q^-_{r,\eta}(x_0,\bar t)$, where $\eta=\delta(\xi \xi_* k)^{q-p+1}r^{sp}$,   $\xi_*\in\ (0,\frac14)$ is to be chosen, and  $\bar t\in (
t_0-\delta\big(1-(\xi \xi_*)^{q-p+1}\big)k^{q-p+1}r^{sp}, t_0)$
is arbitrary.  We apply Lemma \ref{lemShrinking} in each of the cylinders $Q^-_{r, \eta}(x_0, \bar{t})$ and obtain
$$\big|Q^-_{\frac{3}{4} r, \delta \eta}(x_0, \bar{t})\cap\big\{u \leqslant \frac{1}{2}\xi \xi_* k\big\}\big|\leqslant \frac{\gamma\,(\xi \xi_*)^{p-1}}{\delta\,\alpha}|Q^-_{\frac{3}{4} r, \delta \eta}(x_0, \bar{t})|.$$
Using De Giorgi type Lemma \ref{lemDeGiorgi} and choosing $\xi_*$ such that $\frac{\gamma (\xi \xi_*)^{p-1}}{\delta\,\alpha}=\nu$, we conclude that
$$u\geqslant \frac{1}{4}\xi\,\xi_*\,k, \quad \text{in}\quad B_{\frac{1}{2}r}(x_0)\times (\bar{t}-\frac{1}{2}\eta, \bar{t}).$$
Since $\bar{t}$ is arbitrary in the interval $(t_0-\delta k^{q-p+1} r^{sp}+ \delta(\xi \xi_*k)^{q-p+1} r^{sp}, t_0)$, this completes the proof of the proposition.

 \end{proof}

\section{Proof of Proposition \ref{pr1.1}}\label{sec:Weak}

We recall the definition of $\mathcal{I}$
\begin{equation}\label{eqPR1}
\mathcal{I}\leqslant \frac{1}{\delta^{\frac{N+sp}{spq}}}\bigg(\int\limits^{t_0}_{t_0-\delta\theta}\,\,\int\limits_{\mathbb{R}^N}\frac{\big[u_+(x, t)\big]^{p-1}}{\big(\rho+|x-x_0|\big)^{N+sp}}\,dx dt\bigg)^{\frac{1}{q}},\quad \theta=\mathcal{I}^{q-p+1} \rho^{sp}
\end{equation}
with some  $\delta \in (0, 1)$.
To prove Proposition \ref{pr1.1} we use the measure shrinking lemma, Lemma \ref{lemShrinking1} with $k$ replaced by $\mathcal{I}$, which by \eqref{eqPR1} yields, for every $\epsilon \in (0, 1/2)$ that
\begin{equation}\label{eqPR2}
\big|B_{\frac{3}{4} \rho}(x_0)\cap\big\{u(\cdot, \tilde{t})\leqslant \frac{1}{2} \epsilon \mathcal{I}\big\}\big|\leqslant \frac{\gamma\big[\epsilon^q+ \epsilon^{p-1}\delta\big]}{\delta^{\frac{N+sp}{sp}}}|B_{\frac{3}{4} \rho}(x_0)|,
\end{equation}
for some time level $\tilde{t}\in (t_0-\delta \theta, t_0)$, provided that
$$\operatorname{Tail}(u_-, Q_{\rho, \theta}(x_0, t_0))\leqslant \frac{1}{\gamma}\epsilon\,\mathcal{I}.$$
Choose $\epsilon $ sufficiently small so that
$$\epsilon=\min \Big[\Big(\frac{1}{8\gamma} \delta^{\frac{N+sp}{sp}}\Big)^{\frac1q}, \Big(\frac{1}{8\gamma}\delta^{\frac{N}{sp}}\Big)^{\frac{1}{p-1}}\Big]=\Big(\frac{1}{8\gamma} \delta^{\frac{N+sp}{sp}}\Big)^{\frac1q},$$
consequently, \eqref{eqPR2} yields
\begin{equation*}
\big|B_{\frac{3}{4} \rho}(x_0)\cap\big\{u(\cdot, \tilde{t})\leqslant \frac{1}{2} \epsilon \mathcal{I}\big\}\big|\leqslant \frac{1}{4} |B_{\frac{3}{4} \rho}(x_0)|.
\end{equation*}
Lemma \ref{lemPropagation} ensures the existence of $\tilde{\epsilon}$, $\tilde{\delta} \in (0, 1)$ depending only on the data such that
\begin{equation}\label{eqW1}
\big|B_{\frac{3}{4} \rho}(x_0)\cap\big\{u(\cdot, t)\leqslant \tilde{\epsilon}\epsilon \mathcal{I}\big\}\big|\leqslant \frac{1}{2}|B_{\frac{3}{4} \rho}(x_0)|,
\end{equation}
for all time levels $\tilde{t}\leqslant t \leqslant \tilde{t}+\tilde{\delta}(\tilde{\epsilon}\epsilon \mathcal{I})^{q-p+1}\rho^{sp}.$
Choose $\delta$ smaller if necessary, such that
$$(\tilde{t}, \tilde{t}+\tilde{\delta}(\tilde{\epsilon}\epsilon \mathcal{I})^{q-p+1}\rho^{sp})\supset(t_0, t_0+\delta \mathcal{I}^{q-p+1}\rho^{sp}),$$
in view of our choice of $\epsilon$, that is
$$2\delta\leqslant \frac{1}{\gamma}\tilde{\delta}\tilde{\epsilon}^{q-p+1}\,\delta^{\frac{(N+sp)(q-p+1)}{spq}},
\quad \text{i. e.}\quad \delta\leqslant \bigg(\frac{1}{\gamma}\tilde{\delta}\tilde{\epsilon}^{q-p+1}\bigg)^{\frac{spq}{\widetilde  \varkappa}}.$$
This yields \eqref{eqW1} for all $t$ in the range $t_0\leqslant t \leqslant t_0+\delta \theta$. Using Proposition \ref{pr.Expansion}, we obtain 
$$\inf\limits_{B_{\frac{1}{2}\rho}(x_0)} u(\cdot, t)\geqslant \tilde \epsilon \epsilon \xi_* \,\mathcal{I},$$
with some $\xi_*\in (0, 1)$ depending only on the data and for all time levels
$$t \in (t_0+\frac{1}{2}\delta \mathcal{I}^{q-p+1} \rho^{sp}, t_0+\delta \mathcal{I}^{q-p+1} \rho^{sp}).$$
Thus, \eqref{eqWeakProp3} follows, concluding the proof of Proposition \ref{pr1.1}.

\section{Proof of Proposition \ref{pr1.2}}\label{sec:Upper}

In this section we prove the upper bound for  sub-solutions to \eqref{eq1.1}-\eqref{eq1.3}, $u \geqslant 0$ in $\Omega_T$. In what follows, we assume that all cylinders introduced below are contained in $\Omega_T$.

For $k >0$, $\sigma \in (0, 1)$ and $0< r< R$, $0< \lambda < \Lambda$, set
\[
\begin{aligned}
r_\sigma&:=(1-\sigma) r +\sigma R,
&\qquad
\bar r_\sigma&:=\frac{r_\sigma+r_{\sigma/2}}{2},\\
\eta_\sigma&:=(1-\sigma) \lambda+ \sigma \Lambda,
&
\bar\eta_\sigma&:=\frac{\eta_\sigma+\eta_{\sigma/2}}{2},
\end{aligned}
\]
and define the time-dependent levels
$$
k_\sigma(t):= k(1-\sigma) -\tilde{C}\,\mathcal{K}(t),\quad\quad\quad \bar{k}_\sigma(t):=\frac{k_\sigma(t)+k_{\sigma/2}(t)}{2},
$$
where the constant $\tilde{C} >0$ will be chosen later and
$$\mathcal{K}(t):= k^{1-q}
\int\limits^{t}_{t_0-\Lambda}\,\,\int\limits_{\mathbb{R}^N\setminus B_R(x_0)}\frac{[u_+(x, \tau)]^{p-1}}{|x-x_0|^{N+sp}}dx d\tau.$$
We also set
\[
\begin{aligned}
B_\sigma&:=B_{r_\sigma}(x_0),
&\qquad
\bar B_\sigma&:=B_{\bar r_\sigma}(x_0),\\
Q_\sigma&:=B_\sigma\times(t_0-\eta_\sigma,t_0),
&
\bar Q_\sigma&:=\bar B_\sigma\times(t_0-\bar\eta_\sigma,t_0).
\end{aligned}
\]
Define the cutoff function $\zeta(x, t):=\zeta_1(x)\zeta_2(t)$, where $\zeta_1(x)\in C^1_0(\bar{B}_\sigma)$ and $\zeta_2(t)\in C^1(\mathbb{R})$ satisfy the following conditions:
$$0\leqslant \zeta_1(x)\leqslant 1, \quad \zeta_1(x)=1 \quad \text{in}\quad B_{\sigma/2},\quad \text{and}\quad |\nabla \zeta_1(x)|\leqslant\frac{2}{\sigma (R-r)},$$
$$
 0\leqslant \zeta_2(t)\leqslant 1,\quad \zeta_2(t)=
\begin{cases}
0, & t\leqslant t_0-\bar\eta_\sigma,\\
1, & t\geqslant t_0-\eta_{\sigma/2},
\end{cases} \quad
|\zeta'_2(t)|\leqslant \frac{2}{\sigma (\Lambda-\lambda)}.
$$
Furthermore, we assume that
\begin{equation}\label{eqUpper1}
k\geqslant 4 \tilde{C}\,\mathcal{K}(t_0),\quad \text{i. e.}\quad  k\geqslant \bigg(4 \tilde{C}\int\limits^{t_0}_{t_0-\Lambda}\,\,\int\limits_{\mathbb{R}^N\setminus B_R(x_0)}\frac{[u_+(x, \tau)]^{p-1}}{|x-x_0|^{N+sp}}dx d\tau\bigg)^{\frac{1}{q}}.
\end{equation} 
We  use inequality \eqref{eqenergy} for the function $u^+_{\bar{k}_\sigma(t)}=(u-\bar{k}_\sigma(t))_+$  and obtain
 \begin{equation}\label{eqUpper2}
 \begin{aligned}
     &\sup\limits_{t_0-\bar{\eta}_{\sigma}\leqslant \tau \leqslant t_0}\,\,\int\limits_{\bar{B}_\sigma\times\{\tau\}}\,\,\int\limits^u_{\bar{k}_\sigma(\tau)}
 z^{q-1}(z-\bar{k}_\sigma(\tau))\,dz\zeta^p\,dx\\&+\int\limits^{t_0}_{t_0-\bar{\eta}_\sigma}\int\limits_{\bar{B}_\sigma}
 \int\limits_{\bar{B}_\sigma}
 \frac{|u^+_{\bar{k}_\sigma(\tau)}(x, \tau) -u^+_{\bar{k}_\sigma(\tau)}(y, \tau)|^p}{|x-y|^{N+sp}}\min[\zeta(x, \tau), \zeta(y, \tau)]^p\,dx\,dy\,d\tau
 \\&\leqslant -\tilde{C}^q\,\iint\limits_{Q_\sigma}\mathcal{K}'(\tau)\,\int\limits^u_{\bar{k}_\sigma(\tau)}z^{q-1}\,
 dz\zeta^p\,dx\,d\tau+ \gamma \iint\limits_{Q_\sigma}\int\limits^u_{\bar{k}_\sigma(\tau)}
 z^{q-1}(z-\bar{k}_\sigma(\tau))\,dz\zeta^{p-1}\,|\zeta'_\tau|dx d\tau\\&+\gamma \iint\limits_{Q_\sigma}\int\limits_{B_\sigma}\big[u^+_{\bar{k}_\sigma(\tau)}(x, \tau)+u^+_{\bar{k}_\sigma(\tau)}(y, \tau)\big]^p\,\frac{|\zeta(x, \tau)-\zeta(y, \tau)|^p}{|x-y|^{N+sp}}dx dy d\tau\\&+\gamma \iint\limits_{Q_\sigma}
 u^+_{\bar{k}_\sigma(\tau)}(x, \tau)\zeta^p(x, \tau)\bigg(\int\limits_{\mathbb{R}^N\setminus B_\sigma}\frac{\big[u^+_{\bar{k}_\sigma(\tau)}(y, \tau)\big]^{p-1}}{|x-y|^{N+sp}} dy\bigg)dx d\tau=\mathcal{J}_1+\mathcal{J}_2+\mathcal{J}_3+\mathcal{J}_4.
 \end{aligned}
 \end{equation}
We have in $Q_\sigma \cap\big\{u\geqslant \bar{k}_{\sigma}(\cdot)\big\}$
$$u\leqslant \frac{\gamma}{\sigma}(u-k_\sigma(t))_+,$$
so, we estimate $\mathcal{J}_2$ and $\mathcal{J}_3$ as follows
\begin{equation*}
\mathcal{J}_2 +\mathcal{J}_3 \leqslant \frac{\gamma}{\sigma^{q+1}(\Lambda-\lambda)}\iint\limits_{Q_\sigma}[u^+_{k_\sigma(t)}]^{q+1}\,dxdt+
\frac{\gamma}{\sigma^{p}(R-r)^{sp}}\iint\limits_{Q_\sigma}[u^+_{k_\sigma(t)}]^p\,dx dt.
\end{equation*}
Choosing $k$ such that
\begin{equation}\label{eqUpper3}
k\geqslant \bigg(\frac{\Lambda-\lambda}{(R-r)^{sp}}\bigg)^{\frac{1}{q+1-p}},
\end{equation}
we obtain
\begin{equation*}
\mathcal{J}_2 +\mathcal{J}_3 \leqslant \frac{\gamma}{\sigma^{q+1}(\Lambda-\lambda)}\iint\limits_{Q_\sigma}[u^+_{k_\sigma(t)}]^{q+1}\,dx dt.
\end{equation*}
Let $\bar{\epsilon}\in(0,1)$ be specified later.  In the case $0< q< 1$, assuming additionally that 
\begin{equation}\label{eqUpper4}
k\geqslant \bar{\epsilon} M_1,\quad M_\sigma:=\sup\limits_{Q_\sigma}\,u,
\end{equation}
and applying algebraic Lemma \ref{lemAlgebraic} we obtain
\begin{equation}\label{eqUpper5}
\begin{cases}
\int\limits^u_{\bar{k}_\sigma(\tau)}z^{q-1}\,dz \geqslant \gamma\,u^+_{\bar{k}_\sigma(\tau)}\Big[u+\bar{k}_\sigma(\tau)\Big]^{q-1}\geqslant \gamma u^+_{\bar{k}_\sigma(\tau)}\,k^{q-1},\quad \text{if}\quad q\geqslant 1,\\
\int\limits^u_{\bar{k}_\sigma(\tau)}z^{q-1}\,dz \geqslant \gamma u^+_{\bar{k}_\sigma(\tau)}\Big[u+\bar{k}_\sigma(\tau)\Big]^{q-1}\geqslant \gamma u^+_{\bar{k}_\sigma(\tau)}\,\Big[M_\sigma+k\Big]^{q-1}\geqslant \gamma \bar{\epsilon}\,u^+_{\bar{k}_\sigma(\tau)}\,k^{q-1},\,\, \text{if}\,\, 0< q< 1,
\end{cases}
\end{equation}
and hence,
\begin{equation*}
\begin{aligned}
    \mathcal{J}_1&\leqslant -\tilde{C}\bar{\epsilon} \frac{k^{q-1}}{\gamma}\iint\limits_{Q_\sigma}u^+_{\bar{k}_\sigma(\tau)}\,\mathcal{K}'(\tau)\zeta^p\,dx d\tau
\\&=-\frac{\tilde{C}\bar{\epsilon}}{\gamma}\iint\limits_{Q_\sigma}u^+_{\bar{k}_\sigma(\tau)}(x, \tau)\,\zeta^p(x, \tau)\bigg(\int\limits_{\mathbb{R}^N\setminus B_R(x_0)}\frac{[u_+(y, \tau)]^{p-1}}{|y-x_0|^{N+sp}}\,dy\bigg)\,dxd\tau.
\end{aligned}
\end{equation*}
Finally, using the facts that
$$|y-x|\geqslant \frac{R-\bar{r}_\sigma}{R} |y-x_0|\geqslant \frac{(1-\frac{3}{4}\sigma)(R-r)}{R}|y-x_0|\geqslant \frac{1}{4}\frac{R-r}{R}|y-x_0|,\,\, x\in \bar{B}_\sigma,\,\, y\in \mathbb{R}^N\setminus B_R(x_0),$$
and
$$|y-x|\geqslant r_\sigma-\bar{r}_\sigma= \frac{1}{4}\sigma\,(R-r),\quad x\in \bar{B}_\sigma,\quad y\in \mathbb{R}^N\setminus B_\sigma,$$
we obtain
\begin{equation*}
\begin{aligned}
\mathcal{J}_4&\leqslant\gamma \iint\limits_{Q_\sigma}
 u^+_{\bar{k}_\sigma(\tau)}(x, \tau)\zeta^p(x, \tau)\bigg(\sup\limits_{x\in \bar{B}_\sigma}\int\limits_{\mathbb{R}^N\setminus B_R(x_0)}\frac{\big[u^+_{\bar{k}_\sigma(\tau)}(y, \tau)\big]^{p-1}}{|x-y|^{N+sp}} dy\bigg)dx d\tau \\&+\gamma \iint\limits_{Q_\sigma}
 u^+_{\bar{k}_\sigma(\tau)}(x, \tau)\zeta^p(x, \tau)\bigg(\sup\limits_{x\in \bar{B}_\sigma}\int\limits_{B_R(x_0)\setminus B_\sigma}\frac{\big[u^+_{\bar{k}_\sigma(\tau)}(y, \tau)\big]^{p-1}}{|x-y|^{N+sp}} dy\bigg)dx d\tau \\&\leqslant
 \gamma \Big(\frac{R}{R-r}\Big)^{N+sp}\iint\limits_{Q_\sigma}
 u^+_{\bar{k}_\sigma(\tau)}(x, \tau)\zeta^p(x, \tau)\bigg(\frac{k^{p-1}}{R^{sp}}+\int\limits_{\mathbb{R}^N\setminus B_R(x_0)}\frac{\big[u_+(y, \tau)\big]^{p-1}}{|y-x_0|^{N+sp}} dy\bigg)dx d\tau \\&+\frac{\gamma M^{p-1}_1 R^N}{(\sigma(R-r))^{N+sp}}\iint\limits_{Q_\sigma}
 u^+_{\bar{k}_\sigma(\tau)}(x, \tau)dx d\tau \\&\leqslant \gamma \Big(\frac{R}{R-r}\Big)^{N+sp}\iint\limits_{Q_\sigma}
 u^+_{\bar{k}_\sigma(\tau)}(x, \tau)\zeta^p(x, \tau)\bigg(\int\limits_{\mathbb{R}^N\setminus B_R(x_0)}\frac{\big[u_+(y, \tau)\big]^{p-1}}{|y-x_0|^{N+sp}} dy\bigg)dx d\tau \\&+ \gamma \frac{\bar{\epsilon}^{1-p}}{\sigma^{N+sp}} \Big(\frac{R}{R-r}\Big)^{N+sp} (R-r)^{-sp} k^{p-1}\iint\limits_{Q_\sigma}
 u^+_{\bar{k}_\sigma(\tau)}(x, \tau)dx d\tau \\&\leqslant \gamma \Big(\frac{R}{R-r}\Big)^{N+sp}\iint\limits_{Q_\sigma}
 u^+_{\bar{k}_\sigma(\tau)}(x, \tau)\zeta^p(x, \tau)\bigg(\int\limits_{\mathbb{R}^N\setminus B_R(x_0)}\frac{\big[u_+(y, \tau)\big]^{p-1}}{|y-x_0|^{N+sp}} dy\bigg)dx d\tau \\&+ \gamma \frac{\bar{\epsilon}^{1-p}(\Lambda-\lambda)^{-1}}{\sigma^{N+sp+q}} \Big(\frac{R}{R-r}\Big)^{N+sp} \iint\limits_{Q_\sigma}
 [u^+_{k_\sigma(\tau)}]^{q+1}dx d\tau.
\end{aligned}
\end{equation*}
The first term on the right-hand side of this inequality will cancel with the   right-hand side of the estimate for $\mathcal{J}_1$, 
provided that $\tilde{C}$ is chosen to satisfy $\frac{\tilde{C} \bar{\epsilon}}{\gamma} = \gamma \big(\frac{R}{R-r}\big)^{N+sp}$.

Next we will estimate the left-hand side of \eqref{eqUpper2}. Using \eqref{eqUpper5}
\begin{equation*}
\begin{aligned}
    \int\limits^{u}_{\bar{k}_\sigma(\tau)}z^{q-1}(z-\bar{k}_\sigma(\tau)) dz&\geqslant \int\limits^{u}_{\frac{1}{2}(\bar{k}_\sigma(\tau)+u)}z^{q-1}(z-\bar{k}_\sigma(\tau))\,dz
\\&\geqslant \frac{1}{2}(u-\bar{k}_\sigma(\tau))\,\int\limits^{u}_{\frac{1}{2}(\bar{k}_\sigma(\tau)+u)}z^{q-1}\,dz
\geqslant \frac{\bar{\epsilon}}{\gamma}k^{q-1}\,\big[u^+_{\bar{k}_\sigma(\tau)}\big]^2.
\end{aligned}
\end{equation*}
Therefore, collecting the previous estimates, inequality \eqref{eqUpper2} can be rewritten as
\begin{equation}\label{eqUpper6}
\begin{aligned}
&k^{q-1}\,\sup\limits_{t_0-\bar{\eta}_{\sigma}\leqslant \tau \leqslant t_0}\int\limits_{\bar{B}_\sigma\times\{\tau\}}\big[u^+_{\bar{k}_\sigma(\tau)}\big]^2\,\zeta^p\,dx \\&+
\int\limits^{t_0}_{t_0-\bar{\eta}_\sigma}\,\,\int\limits_{\bar{B}_\sigma}\int\limits_{\bar{B}_\sigma}
 \frac{|u^+_{\bar{k}_\sigma(\tau)}(x, \tau) -u^+_{\bar{k}_\sigma(\tau)}(y, \tau)|^p}{|x-y|^{N+sp}}\min[\zeta(x, \tau), \zeta(y, \tau)]^p\,dx\,dy\,d\tau
 \\&\leqslant \frac{\gamma}{\sigma^\gamma \bar{\epsilon}^\gamma}\,(\Lambda-\lambda)^{-1} \Big(\frac{R}{R-r}\Big)^{N+sp} \iint\limits_{Q_\sigma}
 [u^+_{k_\sigma(\tau)}]^{q+1} dx d\tau.
 \end{aligned}
\end{equation}
Now, using the Sobolev embedding theorem and H\"older's inequality, we obtain 
\begin{equation*}
    \begin{aligned}
\iint\limits_{Q_{\sigma/2}}[u^+_{k_{\sigma/2}(\tau)}]^{q+1} &dx d\tau\leqslant
\Big(\iint\limits_{\bar{Q}_\sigma}\big[u^+_{\bar{k}_\sigma(\tau)}(x, \tau)\big]^{p(1+\frac{2s}{N})}[\zeta(x, \tau)]^{p(1+\frac{2 s}{N})}\,dx d\tau\Big)^{\frac{(q+1) N}{p(N+2s)}} \\&\times \big|Q_\sigma \cap\big\{u\geqslant k_{\sigma/2}(\cdot)\big\}\big|^{1-\frac{(q+1) N}{p(N+2s)}}\leqslant  \gamma \bigg(\sup\limits_{t_0-\bar{\eta}_{\sigma}\leqslant \tau \leqslant t_0}\,\,\int\limits_{\bar{B}_\sigma\times\{\tau\}}\big[u^+_{\bar{k}_\sigma(\tau)}\big]^2\,\zeta^p\,dx\bigg)^{\frac{s(q+1)}{N+2s}}\\&\times
\bigg(\int\limits^{t_0}_{t_0-\bar{\eta}_\sigma}\,\,\int\limits_{\bar{B}_\sigma}\int\limits_{\bar{B}_\sigma}
 \frac{|u^+_{\bar{k}_\sigma(\tau)}(x, \tau)\zeta(x, \tau) -u^+_{\bar{k}_\sigma(\tau)}(y, \tau)\zeta(y,\tau)|^p}{|x-y|^{N+sp}}\,dx\,dy\,d\tau\\&
+\frac{\gamma}{\sigma^\gamma(R-r)^{sp}}\iint\limits_{\bar{Q}_\sigma}[u^+_{\bar{k}_\sigma(\tau)}]^p dx\,d\tau \bigg)^{\frac{(q+1)N}{p(N+2s)}} \big|Q_\sigma \cap\big\{u\geqslant k_{\sigma/2}(\cdot)\big\}\big|^{1-\frac{(q+1)N}{p(N+2s)}},
\end{aligned}
\end{equation*}
evaluating the integrand of the first term by the inequality
\begin{multline*}
|u^+_{\bar{k}_\sigma(\tau)}(x, \tau)\zeta(x, \tau) -u^+_{\bar{k}_\sigma(\tau)}(y, \tau)\zeta(y,\tau)|^p \\\leqslant
\gamma|u^+_{\bar{k}_\sigma(\tau)}(x, \tau) -u^+_{\bar{k}_\sigma(\tau)}(y, \tau)|^p\zeta^p(x, \tau)+\gamma |\zeta(x, \tau) -\zeta(y,\tau)|^p[u^+_{\bar{k}_\sigma(\tau)}(x, \tau)]^p,
\end{multline*}
and using \eqref{eqUpper6} we arrive at
\begin{equation*}
\begin{aligned}
    \mathbf{Y}(\sigma/2)&:=\iint\limits_{Q_{\sigma/2}}[u^+_{k_{\sigma/2}(\tau)}]^{q+1}dx d\tau\leqslant \frac{\gamma}{\bar{\epsilon}^\gamma\sigma^\gamma}
k^{(1-q)\frac{s(q+1)}{N+2s}-(q+1)(1-\frac{(q+1)N}{p(N+2s)})}\\&\times (\Lambda-\lambda)^{-\frac{(q+1)s}{N+2s}-\frac{(q+1)N}{p(N+2s)}} \Big(\frac{R}{R-r}\Big)^{(N+sp)(\frac{(q+1)s}{N+2s}+\frac{(q+1)N}{p(N+2s)})}\big[\mathbf{Y}(\sigma)\big]^{1+\frac{s(q+1)}{N+2s}}.
\end{aligned}
\end{equation*}
By interpolation this inequality yields $\lim\limits_{\sigma \rightarrow 0}\mathbf{Y}(\sigma)=0$, provided that
$$\mathbf{Y}(1)\leqslant 
\frac{\bar{\epsilon}^\gamma}{\gamma}\,k^{\frac{\varkappa+sp}{sp}}(\Lambda-\lambda)^{\frac{N+sp}{sp}}\Big(\frac{R-r}{R}\Big)^{\frac{(N+sp)^2}{sp}},
\quad \varkappa=pq+N(p-1-q),$$
which, together with our choices \eqref{eqUpper1}, \eqref{eqUpper3} and \eqref{eqUpper4}, yields
\begin{equation*}
\begin{aligned}
    M_{0}&\leqslant \bar{\epsilon}\,M_1+\frac{\gamma}{ \bar{\epsilon}^\gamma}\bigg((\Lambda-\lambda)^{-\frac{N+sp}{sp}}\Big(\frac{R}{R-r}\Big)^{\frac{(N+sp)^2}{sp}}\iint\limits_{Q_1} u^{q+1}\,dx\,d\tau\bigg)^{\frac{sp}{\varkappa+sp}}+
\gamma \Big(\frac{\Lambda-\lambda}{(R-r)^{sp}}\Big)^{\frac{1}{q+1-p}} \\&+\frac{\gamma}{\bar{\epsilon}^\gamma} \bigg( \Big(\frac{R}{R-r}\Big)^{N+sp}\int\limits^{t_0}_{t_0-\Lambda}\,\,\int\limits_{\mathbb{R}^N\setminus B_R(x_0)}\frac{[u_+(x, \tau)]^{p-1}}{|x-x_0|^{N+sp}}dx d\tau\bigg)^{\frac1q}\\&\leqslant \bar{\epsilon}\,M_1+\frac{\gamma}{ \bar{\epsilon}^\gamma}M^{\frac{(q+2-p)sp}{\varkappa+sp}}_1\bigg((\Lambda-\lambda)^{-\frac{N+sp}{sp}}\Big(\frac{R}{R-r}\Big)^{\frac{(N+sp)^2}{sp}}\iint\limits_{Q_1} u^{p-1}\,dx\,d\tau\bigg)^{\frac{sp}{\varkappa+sp}}\\&+
\gamma \Big(\frac{\Lambda-\lambda}{(R-r)^{sp}}\Big)^{\frac{1}{q+1-p}} +\frac{\gamma}{\bar{\epsilon}^\gamma} \bigg( \Big(\frac{R}{R-r}\Big)^{N+sp}\int\limits^{t_0}_{t_0-\Lambda}\,\,\int\limits_{\mathbb{R}^N\setminus B_R(x_0)}\frac{[u_+(x, \tau)]^{p-1}}{|x-x_0|^{N+sp}}dx d\tau\bigg)^{\frac1q}.
\end{aligned}
\end{equation*}
Using Young's inequality and the fact that $\frac{(q+2-p)sp}{\varkappa+sp} <1$, it follows that
\begin{equation}\label{eqUpper7}
\begin{aligned}
    M_{0}&\leqslant 2\bar{\epsilon}\,M_1+\frac{\gamma}{ \bar{\epsilon}^\gamma}\bigg((\Lambda-\lambda)^{-\frac{N+sp}{sp}}\Big(\frac{R}{R-r}\Big)^{\frac{(N+sp)^2}{sp}}\iint\limits_{Q_1} u^{p-1}\,dx\,d\tau\bigg)^{\frac{sp}{\widetilde \varkappa}}\\&+
\gamma \Big(\frac{\Lambda-\lambda}{(R-r)^{sp}}\Big)^{\frac{1}{q+1-p}}+\frac{\gamma}{\bar{\epsilon}^\gamma} \bigg( \Big(\frac{R}{R-r}\Big)^{N+sp}\int\limits^{t_0}_{t_0-\Lambda}\,\,\int\limits_{\mathbb{R}^N\setminus B_R(x_0)}\frac{[u_+(x, \tau)]^{p-1}}{|x- x_0|^{N+sp}}dx d\tau\bigg)^{\frac{1}{q}}.
\end{aligned}
\end{equation}
We recall that  $\widetilde \varkappa=sp(p-1)+N(p-1-q) > 0$.

Next, we use \eqref{eqUpper7} to apply an interpolation argument. 
For fixed $\rho$, $\theta>0$ and $\sigma \in (0, 1)$, choose $r$ and $\lambda$ such that $\frac{1}{2}\rho \leqslant r < r(1+\sigma) \leqslant \rho$ and $\frac{1}{2} \theta \leqslant \lambda < \lambda (1+\sigma)\leqslant \theta$, and apply \eqref{eqUpper7} with $R=r(1+\sigma)$ and $\Lambda=\lambda(1+\sigma)$.
\begin{equation*}
\begin{aligned}
    \sup\limits_{Q^-_{r, \lambda}(x_0, t_0)} u&\leqslant 2 \bar{\epsilon}\,\sup\limits_{Q^-_{r(1+\sigma), \lambda(1+\sigma)}(x_0, t_0)} u+\frac{\gamma}{ \bar{\epsilon}^\gamma \sigma^\gamma}\bigg[\Big(\frac{\rho^{sp}}{\theta}\Big)^{\frac{N+sp}{\widetilde \varkappa}}\bigg(\rho^{-N-sp}\iint\limits_{Q^-_{\rho, \theta}(x_0, t_0)} u^{p-1}\,dx\,d\tau\bigg)^{\frac{sp}{\widetilde \varkappa}}\\&+
 \Big(\frac{\theta}{\rho^{sp}}\Big)^{\frac{1}{q+1-p}}+ \bigg( \int\limits^{t_0}_{t_0-\theta}\,\,\int\limits_{\mathbb{R}^N\setminus B_{\frac{1}{2}\rho}(x_0)}\frac{[u_+(x, \tau)]^{p-1}}{|x-x_0|^{N+sp}}dx d\tau\bigg)^{\frac{1}{q}}\bigg].
\end{aligned}
\end{equation*}
Interpolating, we obtain
\begin{equation*}
\begin{aligned}
   \sup\limits_{Q^-_{\frac{1}{2}\rho, \frac{1}{2}\theta}(x_0, t_0)} u&\leqslant \gamma \bigg[\Big(\frac{\rho^{sp}}{\theta}\Big)^{\frac{N+sp}{\widetilde \varkappa}}\bigg(\rho^{-N-sp}\iint\limits_{Q^-_{\rho, \theta}(x_0, t_0)} u^{p-1}\,dx\,d\tau\bigg)^{\frac{sp}{\widetilde \varkappa}}+
 \Big(\frac{\theta}{\rho^{sp}}\Big)^{\frac{1}{q+1-p}}\\&+ \bigg( \int\limits^{t_0}_{t_0-\theta}\,\,\int\limits_{\mathbb{R}^N\setminus B_{\frac{1}{2}\rho}(x_0)}\frac{[u_+(x, \tau)]^{p-1}}{|x-x_0|^{N+sp}}dx d\tau\bigg)^{\frac{1}{q}}\bigg]\\&\leqslant \gamma \bigg[\Big(\frac{\rho^{sp}}{\theta}\Big)^{\frac{N+sp}{\widetilde \varkappa}}\bigg(\int\limits^{t_0}_{t_0-\theta}\,\,\int\limits_{\mathbb{R}^N} \frac{[u_+(x, \tau)]^{p-1}}{(\rho+|x-x_0|)^{N+sp}}\,dx\,d\tau\bigg)^{\frac{sp}{\widetilde \varkappa}}+
 \Big(\frac{\theta}{\rho^{sp}}\Big)^{\frac{1}{q+1-p}}\\&+ \bigg( \int\limits^{t_0}_{t_0-\theta}\,\,\int\limits_{\mathbb{R}^N}\frac{[u_+(x, \tau)]^{p-1}}{(\rho+|x-x_0|)^{N+sp}}dx d\tau\bigg)^{\frac{1}{q}}\bigg]. 
\end{aligned}
\end{equation*}
This completes the proof of Proposition \ref{pr1.2}.

\section{Proof of Theorem \ref{th1.1}}\label{sec:Harnack}

We first establish the right-hand side of  \eqref{eqHarnack2} in Theorem \ref{th1.1}.  Apply Proposition \ref{pr1.2} in the cylinder
$$Q^-_{\rho, \delta \theta}(x_0, t_0)=B_\rho(x_0)\times(t_0-\delta \theta, t_0),\qquad \theta= \Big(\frac{1}{4\bar{C}_3}u_0\Big)^{q-p+1}\rho^{sp},\quad u_0:=u(x_0, t_0)$$
with $\delta \in (0, 1)$  and $\bar{C}_3 > 0$ is the constant from Proposition \ref{pr1.2}. We obtain
\begin{equation*}
\begin{aligned}
    u_0\leqslant \sup\limits_{Q^-_{\frac{1}{2}\rho, \frac{1}{2}\delta \theta}(x_0, t_0)} u&\leqslant \frac{\bar{C}_3(4\bar{C}_3)^{\frac{(N+sp)(q-p+1)}{\widetilde \varkappa}}}{\delta^{\frac{N+sp}{\widetilde \varkappa}}u^{\frac{(N+sp)(q-p+1)}{\widetilde \varkappa}}_0}  \bigg( \int\limits^{t_0}_{t_0-\delta \theta}\,\,\int\limits_{\mathbb{R}^N}\frac{[u_+(x, \tau)]^{p-1}}{(\rho+|x-x_0|)^{N+sp}}dx d\tau\bigg)^{\frac{sp}{\widetilde \varkappa}}\\&+
\bar{C}_3 \bigg( \int\limits^{t_0}_{t_0-\delta \theta}\,\,\int\limits_{\mathbb{R}^N}\frac{[u_+(x, \tau)]^{p-1}}{(\rho+|x-x_0|)^{N+sp}}dx d\tau\bigg)^{\frac1q}+\frac{1}{4}\,u_0, 
\end{aligned}
\end{equation*}
which, by a straightforward calculation, yields 
\begin{equation}\label{eqH1}
u_0\leqslant \frac{4\bar{C}_3}{\delta^{\frac{N+sp}{spq}}}\bigg(\int\limits^{t_0}_{t_0-\delta \theta}\,\,\int\limits_{\mathbb{R}^N}\frac{[u_+(x, \tau)]^{p-1}}{(\rho+|x-x_0|)^{N+sp}}dx d\tau\bigg)^{\frac{1}{q}}.
\end{equation}
This proves the right-hand side of \eqref{eqHarnack2}. 

To prove the right-hand side of \eqref{eqHarnack1},  we apply Proposition \ref{pr1.1}. In view of \eqref{eqH1}, we set
$$
\mathcal I:=\frac{1}{4\bar{C}_3}\,u_0.
$$
Then
$$ \mathcal{I}\leqslant \frac{1}{\delta^{\frac{N+sp}{spq}}}\bigg( \int\limits^{t_0}_{t_0-\delta \theta}\,\,\int\limits_{\mathbb{R}^N}\frac{[u_+(x, \tau)]^{p-1}}{(\rho+|x-x_0|)^{N+sp}}dx d\tau\bigg)^{\frac{1}{q}},\quad \theta=\mathcal{I}^{q-p+1} \rho^{sp}.$$
Choosing $\delta$ small enough and applying  Proposition \ref{pr1.1}, we arrive at the required  inequality  \eqref{eqHarnack1}, namely
\begin{equation}\label{eqH2}
\frac{1}{4 \bar{C}_3}\,u_0=\mathcal{I}\leqslant \bar{C}_2\,\inf\limits_{B_\rho(x_0)} u(\cdot, t),
\end{equation}
for all $t$ in the range
$$t_0+\frac{1}{2}\delta \Big(\frac{1}{4\bar{C}_3}u_0\Big)^{q-p+1}\rho^{sp}\leqslant t\leqslant  t_0+\delta \Big(\frac{1}{4\bar{C}_3}u_0\Big)^{q-p+1}\rho^{sp},$$
provided that
\begin{equation*}
\left(\,\,
\int\limits_{t_0-\rho^{sp}\,u_0^{q-p+1} }^{t_0+ \rho^{sp}\,u_0^{q-p+1}}\,\,\,
\int\limits_{\mathbb{R}^N\setminus B_\rho(x_0)}
\frac{[u_-(x,t)]^{p-1}}
{|x-x_0|^{N+sp}}
,dx,dt
\right)^{\frac1q}
\leqslant
\frac{1}{2 \bar{C}_1 \bar{C}_3} u_0.
\end{equation*}
The proof of the right-hand side of \eqref{eqHarnack1} is concluded by  redefining  the  constant $\delta$.

To prove the left-hand inequality in \eqref{eqHarnack1}, we use the results of \cite[Chapter 6, Section 9]{DiBGiaVes2012}, which require only the continuity of the solution and the right-hand inequality in \eqref{eqHarnack1}, established above. We leave the details of the proof to the reader.

To establish the left-hand inequality in \eqref{eqHarnack2}, we apply inequality \eqref{eqShr8} from the shrinking Lemma \ref{lemShrinking1}  with $\frac{1}{2} \epsilon k=C_2 u_0$, in the cylinder $Q^*:=B_\rho(x_0)\times (t_0-\delta \eta, t_0-\frac{1}{2} \delta \eta)$, where
$\eta:=u_0^{q-p+1}\rho^{sp}$  and $\delta \in (0, 1)$, $C_2> 0$ are the constants defined in \eqref{eqHarnack1} of Theorem \ref{th1.1}. We obtain
\begin{equation*}
\inf\limits_{t\in (t_0-\delta\eta, t_0-\frac{1}{2}\delta\eta)} \big|B_{\rho}(x_0)\cap\big\{u(\cdot, t)\leqslant C_2 u_0\big\}\big| \leqslant\frac{\gamma(C_2)\, u_0^q}{\int\limits^{t_0-\frac{1}{2}\delta \eta}_{t_0-\delta\eta}\int\limits_{\mathbb{R}^N}\frac{[u_+(y, \tau)]^{p-1}}{(\rho+|x_0-y|)^{N+sp}}dy d\tau}|B_\rho(x_0)|.
\end{equation*}
By the left-hand inequality in \eqref{eqHarnack1} we have
$$
\inf\limits_{t\in (t_0-\delta\eta, t_0-\frac{1}{2}\delta \eta)} \big|B_{\rho}(x_0)\cap\big\{u(\cdot, t)\leqslant C_2 u_0\big\}\big|=|B_\rho(x_0)|.
$$
Combining this with the preceding estimate, we obtain
$$
\int\limits^{t_0-\frac{1}{2}\delta \eta}_{t_0-\delta\eta}\,\int\limits_{\mathbb{R}^N}\frac{[u_+(y, \tau)]^{p-1}}{(\rho+|x_0-y|)^{N+sp}}dy d\tau\leqslant \gamma u_0^q.$$
This completes the proof of Theorem \ref{th1.1}.

\section*{Acknowledgements} 
Igor Skrypnik and Mariia Savchenko were supported by the National
Research Foundation of Ukraine, project No. 2025.07/0369 “Qualitative methods of nonlinear
analysis of heterogeneous structures.”

\bibliographystyle{abbrvurl}
\bibliography{ref}

\end{document}